\newtheorem{theorem}{Theorem}
\newtheorem{lemma}{Lemma}
\newtheorem{prop}{Proposition}
\newtheorem{case}{Case}
\newtheorem{cor}{Corollary}
\newtheorem{subcase}{Case}
\numberwithin{subcase}{case}
\theoremstyle{definition}
\newtheorem*{definition}{Definition}
\newcommand\C{\mathbb{C}}
\newcommand\Z{\mathbb{Z}}
\newcommand\R{\mathbb{R}}
\begin{document}

\title{Linear Triangle Dynamics: The Pedal Map and Beyond. }
\markright{Linear Triangle Dynamics}
\author{Claire Castellano and Corey Manack}

\maketitle

\begin{abstract}
We present a moduli space for similar triangles, then classify triangle maps $f$ that arise from linear maps on this space, with the well-studied pedal map as a special case. Each linear triangle map admits a Markov partition, showing that $f$ is mixing, hence ergodic.
\end{abstract}

\section{Introduction}
\label{Intro}
Because our main result exploits the symmetries a triangle can possess, we collect a few definitions. For us, a {\emph{triangle} $T=z_1z_2z_3$ is an ordered triple $(z_1,z_2,z_3)$ of distinct points in the complex plane $\C$, and $T$ is \emph{flat} if $z_1,z_2,z_3$ are collinear. Let $\mathcal{T}$ be the collection of triangles. Our convention is to draw edges cyclically from $z_i$ to $z_{i+1}$, (indices modulo $3$). Two triangles are \emph{similar} if the sets of edge lengths are proportional. The \emph{normalized principal angle} between complex numbers $z_1,z_{2}$ is given by \[\theta_{z_1,z_{2}} = \frac{1}{\pi}\arccos\left(\frac{\operatorname{Re}(z_1\overline{z_{2}})}{|z_1||z_{2}|}\right),\quad 0\leq\theta_{z_i,z_{i+1}}\leq 1\] and the (normalized) \emph{interior angle} of a triangle at vertex $z_i$ is $\alpha_i:=\theta_{z_{i+2}-z_i,z_{i+1}-z_i}$. The $\emph{shape}$ of $T$ is the ordered triple of interior angles $(\alpha_1,\alpha_2,\alpha_3)$ with $\alpha_1+\alpha_2+\alpha_3 =1$. By the law of sines, two triangles $T_1,T_2$ are \emph{similar} if the shape of $T_1$ is equal to the shape of $T_2$, up to permutation of vertices. If we let $S$ be the group generated by affine transformations $z \mapsto az+b, a,b\in\C, a\neq 0$ and the group $\Sigma_3$ of permutations on $3$ letters, then the diagonal action of $S$ on $\C^3$ partitions $\mathcal{T}$ into collections of similar triangles. Call $[T]\in\mathcal{T}/S$ the \emph{similarity class} of $T$. By means of sterographic projection, we append to $\mathcal{T}$ degenerate triangles with a vertex at infinity. If we declare the interior angle at infinity to be $0$ then the interior angles of degenerate triangles sum to $1$ and the $S$-action formulation of smilarity extends to these triangles as well. We say that a \emph{triangle map} is any function $f:\mathcal{T}/S\to\mathcal{T}/S$. In other words, a function $g\colon \mathcal{T}\to\mathcal{T}$ is a triangle map if for every $s\in S$ there exists $u\in S$ such that $T_1 = sT_2$ implies $f(T_1)=uf(T_2)$. We introduce three sets to describe similaity classes in $\mathcal{T}/S$. 
Define $A$ be the plane \[A=\{(\alpha_1,\alpha_2,\alpha_3)\in\R^3\mid \alpha_1+\alpha_2+\alpha_3 =1\},\] 
$A_p\subset A$ be the set of interior angles \[A_p=\{(\alpha_1,\alpha_2,\alpha_3)\in [0,1]^3 \mid \alpha_1 \geq 0,\alpha_2 \geq 0,\alpha_3 \geq 0\},\]
and $D\subset A_p$ be the set of all ordered interior angles \[D=\{(\alpha_1,\alpha_2,\alpha_3)\in [0,1]^3 \mid \alpha_1\geq\alpha_2\geq\alpha_3\geq 0\}.\]
Notice that the interior angle function establishes a bijection $\phi: D \to \mathcal{T}/S$;  each shape $(\alpha_1,\alpha_2,\alpha_3)\in D$ describes a unique similarity class $[T]\in \mathcal{T}/S$ by assigning $\alpha_1$ to a largest interior angle, then $\alpha_2$ to the next largest, then $\alpha_3$. Thus, for each triangle map $f$ there exists a unique map $\overline{f}\colon D\to D$ such that the following diagram commutes:
\[\begin{CD}
D @>\overline{f}>> D\\
@VV\psi V    @VV\psi V \\
\mathcal{T}/S @>f>>  \mathcal{T}/S
\end{CD}\]
and we refer to a triangle map by either $f$ or $\overline{f}$. Our interest is in tracking the shape of $f^n([T])$ for various $f$, $n$, and $[T]$. 
The paper is organized as follows.  \Cref{sec:pedal} reviews the construction of pedal triangles, whose triangle map $P:D\to D$ we call the \emph{pedal map}. In \cref{sec:moduli} we show how to identify an element of $A$ to a shape in $D$ by describing a similarity class $[T]$ with nonprinciple angles. These identifications arise as the action of the wallpaper group $G=p6m$ on $A$, with $D$ homeomorphic $A/G$. Borrowing terminology from toral automorphisms, call $f \colon D\to D$ a \emph{linear triangle map} if $f$ is the quotient map of an invertible linear map $M:\R^3 \to \R^3$ that leaves $A$ invariant and preserves the identifications induced by $G$. Coordinatizing, we call $M$ an \emph{angle transition matrix} (ATM) of $f$. \Cref{sec:trimaps} contains our main theorem, which classifies all possible ATM's.

\begin{theorem}
\label{thm:circsym}[Classification of angle transition matrices]
Suppose $M:A\to A$ is an ATM. Then there exists $g\in G$ such that $gM$ is either:
\begin{enumerate}
\item (Type I) A circulant and symmetric matrix
\begin{equation}
\label{equ:goodmat}
\begin{bmatrix}
c_0 & c_1 & c_1 \\
c_1 & c_0 & c_1 \\
c_1 &  c_1 & c_0 \\
\end{bmatrix}
\end{equation} with $c_0\in\Z,c_1\in\Z, c_0+c_1+c_1=1$
\item (Type II)
\begin{equation}
\label{equ:badmat1}
T_w^{-1}\begin{bmatrix}
c_0/3 & c_1/3 & c_1/3 \\
c_1/3 & c_0/3 & c_1/3 \\
c_1/3 &  c_1/3 & c_0/3 \\
\end{bmatrix}
\end{equation} with $c_0\in\Z,c_1\in\Z, c_0+c_1+c_1=1$, $c_0$ congruent to $1$ mod $3$, and $T_w$ is the matrix that acts on $A$ by translation in the direction of $w=(1/3,1/3,-2/3)$.
\item (Type III)
\begin{equation}
\label{equ:badmat2}
\begin{bmatrix}
0 & k & -k \\
-k & 0 & k \\
k+1 &  -k+1 & 1 \\
\end{bmatrix}
\end{equation} with $k\in Z$
\end{enumerate}
\end{theorem}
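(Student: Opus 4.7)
Plan. The plan is to unpack the two defining conditions of an ATM as explicit constraints on $M$, then reduce to canonical form via the allowed left multiplication $M\mapsto gM$, after which the three types emerge from analyzing the induced automorphism of the point group of $p6m$.

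First I would extract the linear constraints. Linearity of $M$ together with $M(A)\subseteq A$, evaluated via $(1,1,1)Mv=1$ for $v\in A$, forces each column of $M$ to sum to $1$, so $M$ restricts to an affine self-map of $A$ whose linear part acts on $A_0=\{\alpha_1+\alpha_2+\alpha_3=0\}$. Preservation of $G$-identifications says $Mg=g'M$ for some $g'\in G$ whenever $g\in G$, and invertibility of $M$ upgrades this to $MGM^{-1}=G$. Writing $G=L\rtimes P$ with $L\subset A_0$ the translation lattice of $p6m$ (generated by $(1,-1,0)$ and $(0,1,-1)$) and $P\cong D_6$ the point group, the linear conjugate of a translation is a translation, so $MLM^{-1}=L$: this is the integrality condition that $M|_{A_0}$ is a lattice automorphism of $L$. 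What remains of $MGM^{-1}=G$ is an induced automorphism $\phi_M\in\operatorname{Aut}(P)$ together with an $L$-valued cocycle on $P$ measuring the failure of literal intertwining.

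Next I would exploit the freedom to replace $M$ by $gM$ for $g\in G$, which does not change the descended map on $A/G$. Pre-multiplying by an appropriate $p\in P$ kills any inner part of $\phi_M$; since $\operatorname{Out}(D_6)\cong\Z/2\Z$, exactly two cases for $\phi_M$ survive---the identity, or the outer automorphism exchanging the two conjugacy classes of reflections. Further pre-multiplication by translations in $L$ absorbs coboundaries, so what remains is a finite set of cohomology representatives. In the trivial-$\phi_M$ branch with trivial cocycle, $M$ commutes with $\Sigma_3\subset P$ on the nose; its centralizer inside the linear endomorphisms of $A_0$ is two-dimensional, spanned by $I$ and the all-ones matrix $J$, and column-sum together with integrality then forces the circulant-symmetric Type I form with $c_0,c_1\in\Z$, $c_0+2c_1=1$. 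The non-trivial ($3$-torsion) cocycle class in this branch produces Type II: the ``circulant-symmetric on $\tfrac13 L$'' candidate must be pre-composed with $T_w^{-1}$ to restore integrality, and $c_0\equiv 1\pmod 3$ is exactly the arithmetic condition that this correction lands in $L$. The outer-automorphism branch produces Type III, with the skew form \eqref{equ:badmat2} pinned down to one integer parameter $k$ by the lattice constraint.

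I expect the main obstacle to be this final cocycle bookkeeping---verifying that the listed representatives genuinely exhaust the possibilities and that no hybrid of the Type II and Type III twists escapes all three families after one more left multiplication by $G$. A secondary concern is pinning down $w=(1/3,1/3,-2/3)$ as the canonical Type II translation (as opposed to some other third-integer vector) up to equivalences induced by $G$.
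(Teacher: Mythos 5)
Your reduction hinges on the claim that invertibility upgrades the intertwining relation to $MGM^{-1}=G$, hence $MLM^{-1}=L$, so that $M|_{A_0}$ is an \emph{automorphism} of the translation lattice. This is false, and the failure is not peripheral: the pedal matrix (Type I with $c_0=-1$, $c_1=1$) acts on $A_0$ as multiplication by $-2$, so $ML=2L$ is a proper sublattice of index $4$ and $MGM^{-1}$ is a proper subgroup of $G$. The correct statement --- and the one the paper actually proves --- is the one-sided containment $M\Lambda\subseteq\Lambda$ (equivalently $MLM^{-1}\subseteq L$), i.e.\ integrality without unimodularity; note that a circulant symmetric lattice automorphism with column sums $1$ would satisfy $c_0-c_1=\pm 1$ and $c_0+2c_1=1$, whose only integer solution is the identity, so your hypothesis as stated would collapse the classification to trivial maps. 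Relatedly, the step ``$Mg=g'M$ for some $g'\in G$'' needs an argument: preservation of re-expression is the pointwise statement $MGv\subseteq GMv$, and promoting the pointwise choice of $g'$ to a single group element requires either a Baire-category/analytic-continuation argument or the local point group property (\Cref{thm:local prop}), which is precisely the tool the paper builds for this purpose and then uses to prove $M\Lambda\subseteq\Lambda$ and $MR\subseteq R$.

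The remaining architecture --- normalizing the point group, splitting on $\operatorname{Out}(D_6)\cong\Z/2\Z$, and sorting the translation ambiguity into cohomology classes --- is a genuinely different and potentially attractive organization of the case analysis (the paper instead deduces that $MA_p$ is an equilateral triangle with vertices in $\Lambda$ and medians on reflection lines, enumerates the possible edge directions, and splits on whether $|H_{Mb}|$ is $6$ or $12$). But as written you have only named the final step (``cocycle bookkeeping'') rather than carried it out, and that step is where Types II and III are actually pinned down: one must verify that the $3$-torsion class forces $c_0\equiv 1 \pmod 3$ and the specific translation $T_w$ with $w=(1/3,1/3,-2/3)$, that the outer branch yields exactly the one-parameter family \eqref{equ:badmat2}, and that no hybrid class survives one more left multiplication by $G$. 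A smaller but real slip: the centralizer of $\Sigma_3$ in $\operatorname{End}(A_0)$ is one-dimensional, since the standard representation is absolutely irreducible and $J$ restricts to $0$ on $A_0$; the span of $I$ and $J$ is the centralizer in $\operatorname{End}(\R^3)$, and the circulant symmetric form must be extracted from the affine action on $A$, not from $A_0$ alone.
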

\begin{figure}
\label{3types}
\begin{center}
\scalebox{.242}{\includegraphics{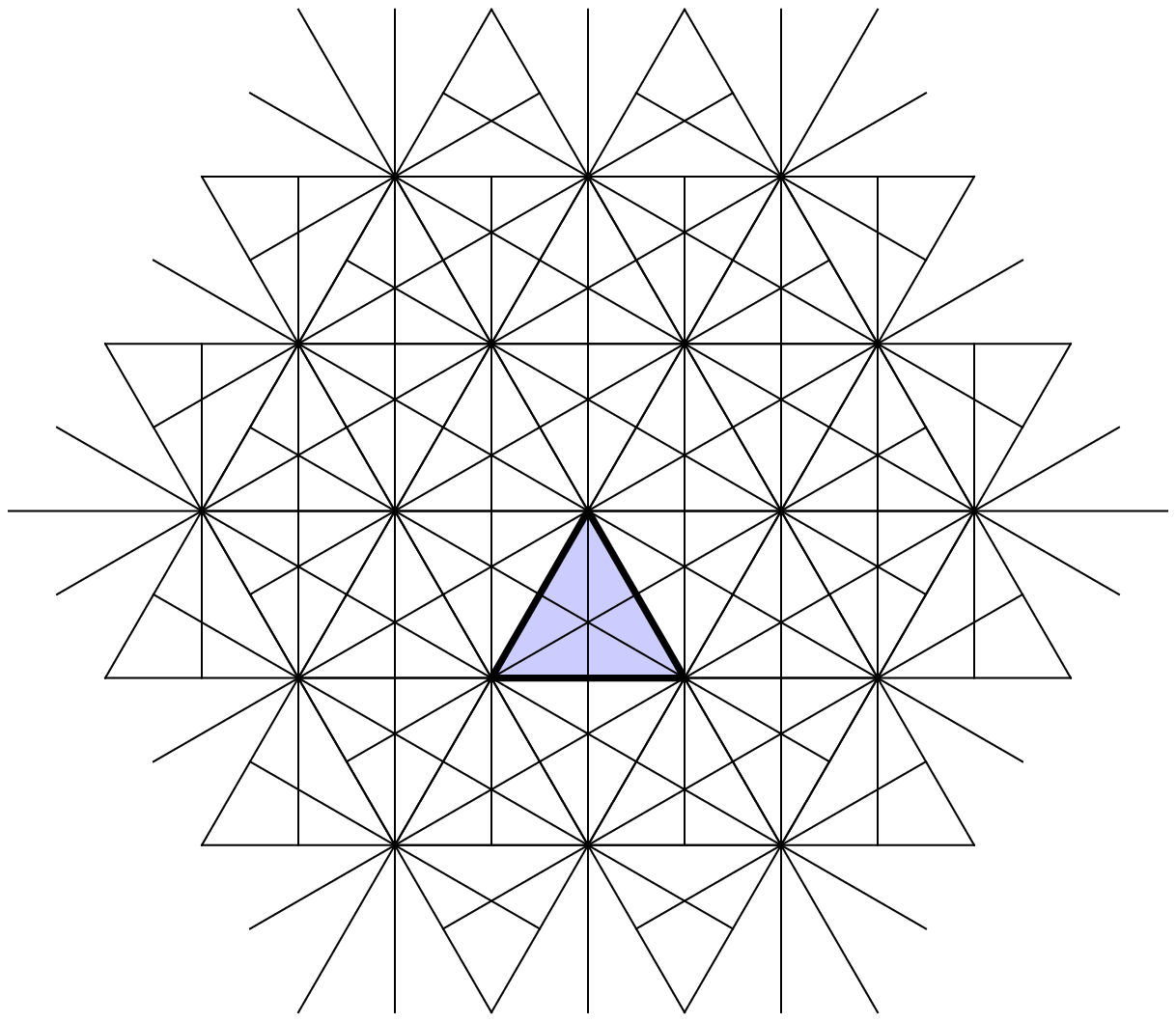}}
\scalebox{.242}{\includegraphics{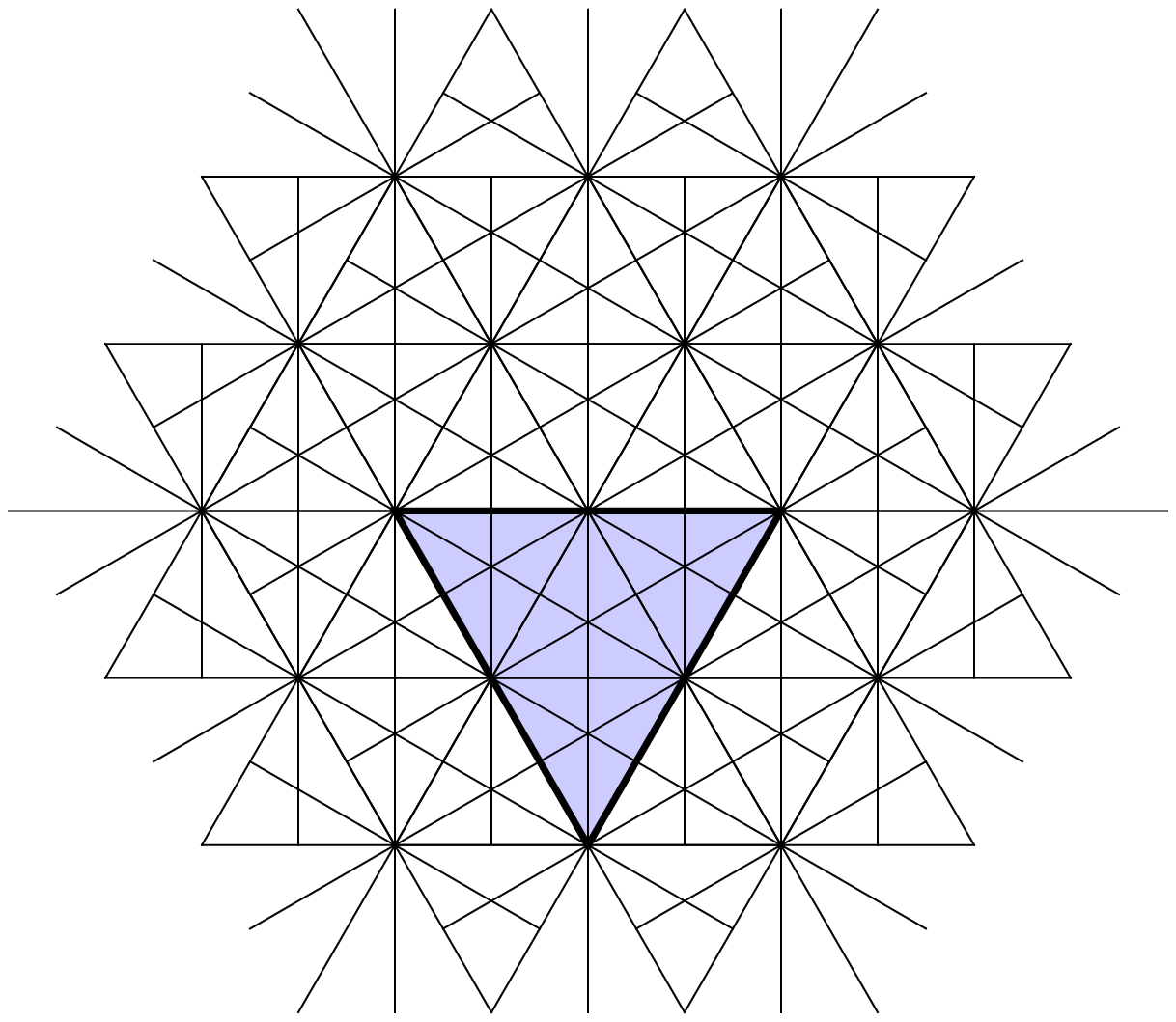}}
\scalebox{.242}{\includegraphics{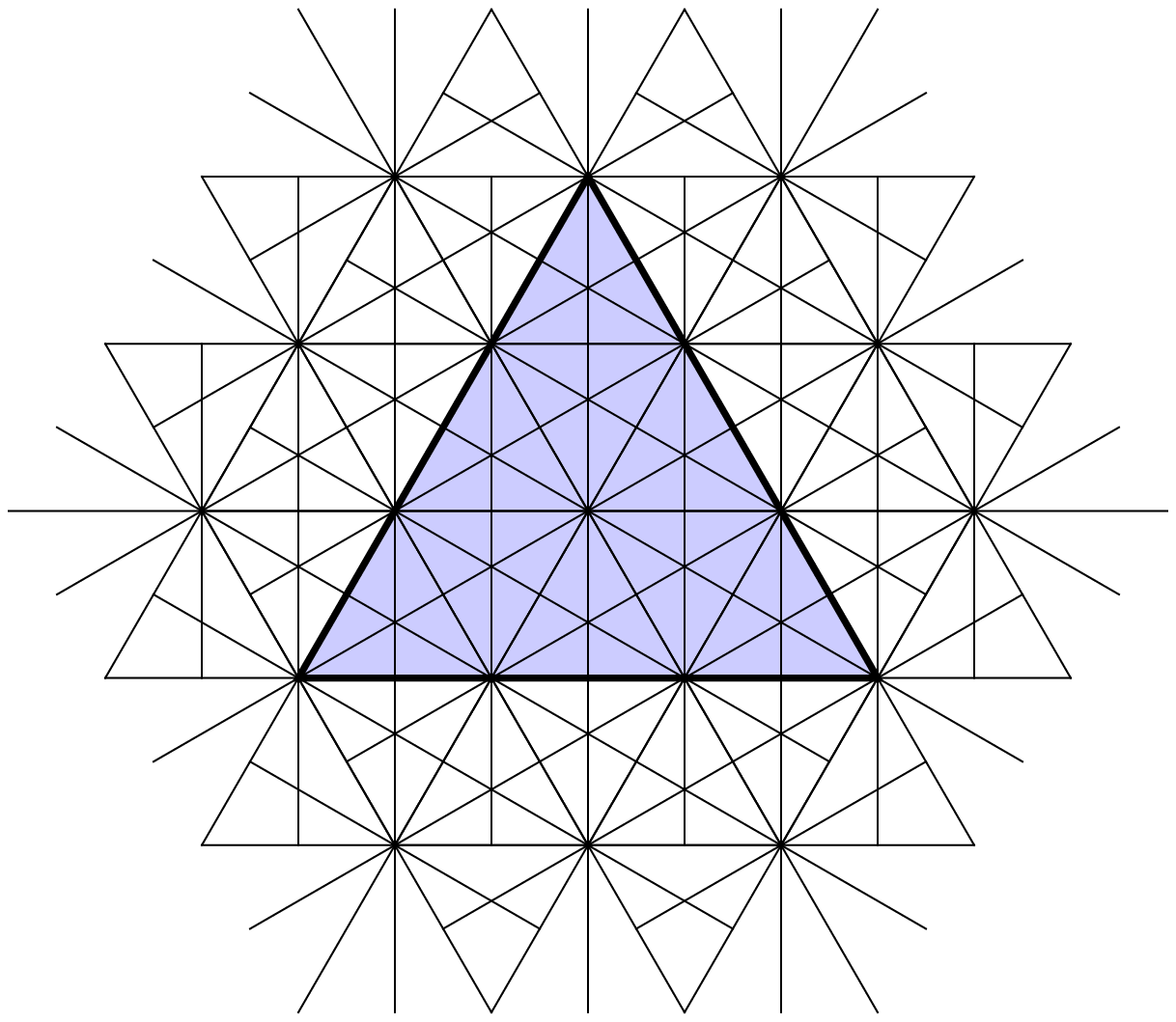}}
\scalebox{.242}{\includegraphics{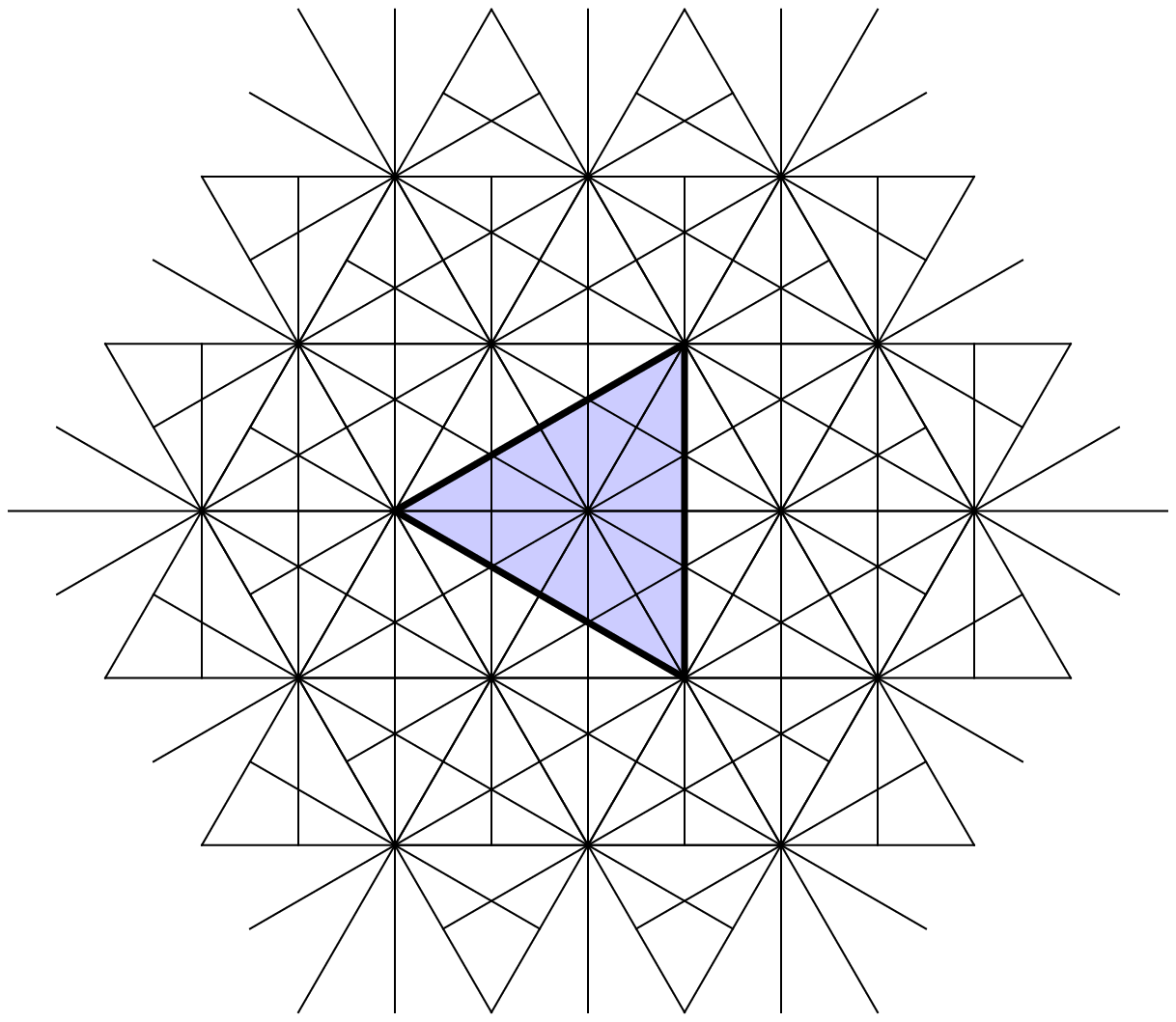}}
\end{center}
\caption{The four images display $MA_p$ (shaded in blue) in $A/G$ for various ATM's $M$. From left to right, $M$ is: the identity, the pedal map (Type I), Type II, and Type III }
\end{figure}
The three types are displayed in Figure \eqref{3types}. Many articles, some appearing in the \emph{Monthly}, have studied the measurable dynamics of the pedal map $P$, showing $P$ is ergodic \cite{La90}, mixing \cite{Un90}, and semiconjugate to a Bernoulli shift on $4$ symbols \cite{Un90},\cite{Al93}. Using the classification (\Cref{thm:circsym}) and well-known results about Markov partitions all such triangle maps are semiconjugate to a one-sided Bernoulli shift, hence mixing and ergodic (\Cref{thm:dynamics}). Since the (extended) pedal map is Type I linear (\Cref{thm:pedallin}), \Cref{thm:dynamics} includes the dynamics of $P$ as a special case. 
\section{The Pedal Map}
\label{sec:pedal}
Our motivating example begins with a reference triangle $T_0= x_1x_2x_3$.  For each vertex $x_i\in\C$ of $T_0$, consider the line $\overleftrightarrow{x_{i+1}x_{i+2}}$ passing through the other two points (indices modulo $3$). Let $y_1$ be the unique point of intersection between $\overleftrightarrow{x_{i+1}x_{i+2}}$ and its perpendicular through $x_i$. Adjoining edges between the feet of the three perpendiculars forms the first {\emph{pedal triangle}}  $T_1 = \Delta y_1y_2y_3$ from $T_0$.  Iterating $n$ times creates the $n$-th pedal triangle $T_n$ from $T_0$.  
Hobson correctly wrote down a formula for the (normalized) interior angles of $T_1$ in terms of $T_0$, namely, 
\begin{equation}
\label{Hobsonone}
\alpha_1 = 1-2\alpha_0,\beta_1 = 1-2\beta_1,\gamma_1 = 1-2\gamma_0
\end{equation}
when $T_0$ is acute, and
\begin{equation}
\label{Hobsontwo}
\alpha_1 = 2\alpha_0-1,\beta_1 = 2\beta_0,\gamma_1 = 2\gamma_0
\end{equation}
if $\alpha_0$ is obtuse, with similar formulas when $\beta_0$, $\gamma_0$ is obtuse. 
Notice that the formula degenerates for right triangles, when the feet of two perpediculars coincide. Hobson then went on to write down an formula for the interior angles $(\alpha_n,\beta_n,\gamma_n)$ of the $n$th pedal triangle $T_n$. Kingston and Synge \cite{KS88} recognized that Hobson's formula for $T_n$ was flawed, and in correcting it,\footnote[1]{Periodicity in iterated pedal triangles was observed for specific $T_0$ in \cite{Tu33} and \cite{Ta45}} proved that the sequence $\{(\alpha_n,\beta_n,\gamma_n)\}$ of interior angles is eventally periodic if angles of $T_0$ are rational and not dyadic. They introduced a simple way of parametrizing the shape of triangles by the set $A_p$, pictured in Figure \cref{KSAS}. 
\begin{figure}
\begin{center}
\label{KSAS}
\scalebox{.37}{\includegraphics{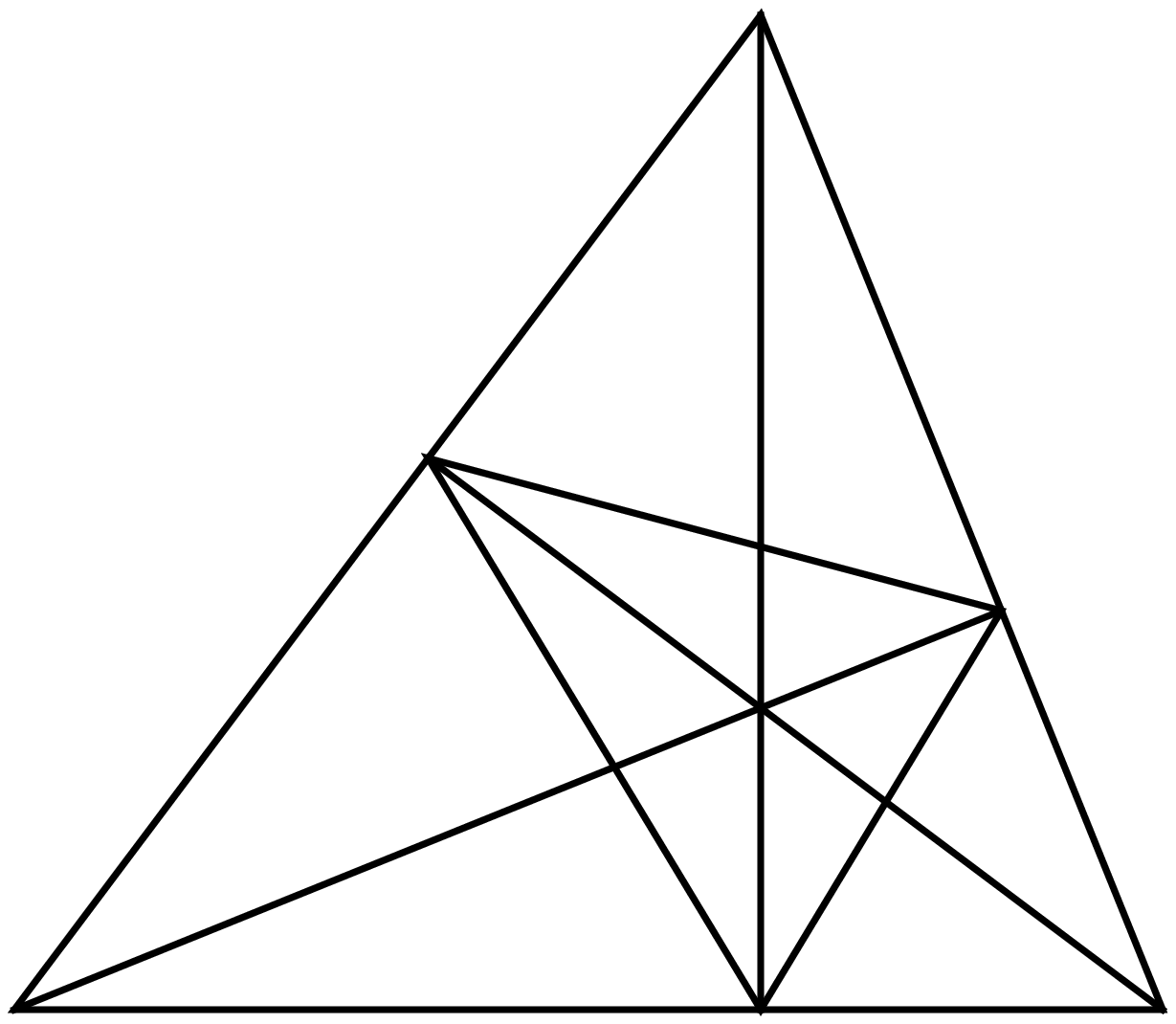}}\ \ \ \ \ \ \ \ \ \ \ \ \ \ \ \ \ \scalebox{.5}{\includegraphics{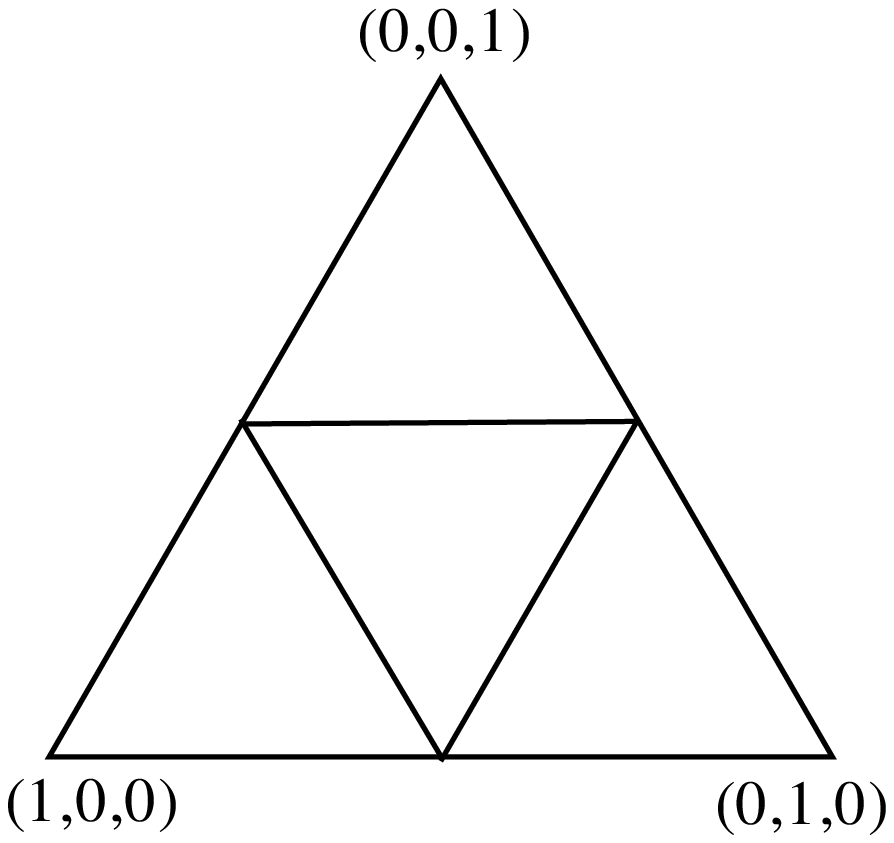}}
\end{center}
\caption{The figure on the left shows a reference triangle with first pedal triangle. The figure on the right is the space $A_p$ of interior angles, where the central triangle represents acute triangles and three outer triangles for obtuseness in $\alpha,\beta,\gamma$ respectively.} 
\end{figure}
The $3$ perpendiculars of $T$  meet at a point, called the  \emph{orthocenter} $O$ of $T$. Observe that Hobson's formulas define a piecewise four-to-one mapping $P:A_p\to A_p$, because the triangles $Oz_2z_3, z_1Oz_3, z_1z_2O$ also have first pedal triangle equal to $T$. Following \cite{KS88}, we call any one of these four preimages an \emph{ancestor} of $T$. Each nonflat $T_0$ has exactly one acute ancestor which we call the \emph{antipedal triangle} $T_{-1}$ of $T_0$. If we include similarity by permuting vertices then Hobson's formulas define a map $P: D\to D$ which we also call the pedal map. \\
Our intuition stems from the following two observations. First, the substitution $1=\alpha_n+\beta_n+\gamma_n$ into Hobson's formula yields, when $T_n$ is acute,
\begin{center}
\begin{tabular}{c}
$\alpha_{n+1}=-\alpha_n+\beta_n+\gamma_n$\\
$\beta_{n+1}=+\alpha_n-\beta_n+\gamma_n$\\ 
$\gamma_{n+1}=+\alpha_n+\beta_n-\gamma_n$\\
\end{tabular}\! = $\begin{bmatrix}
-1 & 1 & 1 \\
1 & -1 & 1 \\
1 & 1 & -1 
\end{bmatrix}\begin{bmatrix}
\alpha_n  \\
\beta_n  \\
\gamma_n  
\end{bmatrix}. $
\end{center}
so that the angles of $T_{n+1}$ are linear combinations of $T_n$. This was done implicitly in \cite[Section 3]{Ma10}. If we let $M$ be the matrix 
\[ M = \begin{bmatrix}
-1 & 1 & 1 \\
1 & -1 & 1 \\
1 & 1 & -1 
\end{bmatrix}\] then $M$ agrees with $P$ on the part of $A_p$ that describes acute triangles. While $M$ is invertible, $P$ is not. We leave it to the reader to verify that the angles $\alpha_{-1},\beta_{-1},\gamma_{-1}$ of $T_{-1}$ are given by
\[\begin{bmatrix}
\alpha_{-1}\\
\beta_{-1}\\
\gamma_{-1}
\end{bmatrix} =\begin{bmatrix}
0 & 1/2 & 1/2 \\
1/2 & 0 & 1/2 \\
1/2 & 1/2 & 0 
\end{bmatrix}\begin{bmatrix}
\alpha_0  \\
\beta_0  \\
\gamma_0  
\end{bmatrix} \]
Our second observation is that
\[ \begin{bmatrix}
0 & 1/2 & 1/2 \\
1/2 & 0 & 1/2 \\
1/2 & 1/2 & 0 
\end{bmatrix}^{-1}= \begin{bmatrix}
-1 & 1 & 1 \\
1 & -1 & 1 \\
1 & 1 & -1 
\end{bmatrix}. \]
\noindent and we see the inverse nature between pedaling and antipedaling in the angles of $T$ when $T$ is acute. Since $M^{-1}$ is a regular Markov chain with steady state vector $(1/3,1/3,1/3)$ and two eigenvalues of $-1/2$, the antipedal map is a contaction map on $A_p$, and $M^{-1}A_p$ excludes obtuse triangles. Indeed, for each $T_0$, $T_{-n}$ tends to equilateral as $n\to\infty$. Inverting, $M$ fixes $(1/3,1/3,1/3)$ and expands $A_p$ by a factor of $-2$, sending obtuse triples in $A_p$ to points in $A$ but outside $A_p$. Since $A_p$ describes all triangle shapes (up to permutation), we seek to identify triples in $A\setminus A_p$ to points in $A_p$ that describe the same similarity class. For $T_n$ obtuse in $\alpha_n$, again substitute $1=\alpha_n+\beta_n+\gamma_n$ into Hobson's formula to get
\begin{equation}
\label{Ralpha}
\begin{bmatrix}
\alpha_{n+1}\\
\beta_{n+1}\\
\gamma_{n+1}
\end{bmatrix} = \begin{bmatrix}
1 & -1 & -1 \\
0 & 2 & 0 \\
0 & 0 & 2 
\end{bmatrix}\begin{bmatrix}
\alpha_n  \\
\beta_n  \\
\gamma_n  
\end{bmatrix}=\begin{bmatrix}
-1 & 0 & 0 \\
1 & 0 & 1 \\
1 & 1 & 0 
\end{bmatrix}
\begin{bmatrix}
-1 & 1 & 1 \\
1 & -1 & 1 \\
1 & 1 & -1 
\end{bmatrix}
\begin{bmatrix}
\alpha_n  \\
\beta_n  \\
\gamma_n  
\end{bmatrix}.
\end{equation}
In section \cref{sec:moduli} we shall see that 
\begin{equation}
\label{equ:aref} R_{\alpha}:=\begin{bmatrix}
-1 & 0 & 0 \\
1 & 0 & 1 \\
1 & 1 & 0 
\end{bmatrix}
\end{equation}
arises naturally as an action on $A$, identifying points that describe the same similarity class in $\mathcal{T}/S$.\\
When studying the measurable dynamics of $P$, previous authors ignored those $[T_0]$ which were eventually flat since that subset of $A_p$ is a set of (normalized Lebesque) measure zero. Manning (\cite{Ma10}) augmented $A_p$ to include nonprincipal (at least one of $\alpha,\beta,\gamma <0$) and flat $T_0$ by identifying triples via orientation changes. He then showed that $P$ could be continously extended to flat triangles and found the limit point of the circumcenters of $T_n$ as $n\to\infty$ (\cite[Theorem 3.2]{Ma10}). In view of observation \eqref{Ralpha}, we recast and complete the identifications carried out in \cite{Ma10}, in the next section. 
\section{Moduli space for triangles}
\label{sec:moduli}
Loosely speaking, a moduli space is a collection of numerical ingredients that encode the salient features of geometric objects. We have already seen one moduli space for $\mathcal{T}/S$: the set $D$. Let $e_1,e_2,e_3$ be the standard coordinate vectors of $\R^3$. Whether we treat $v\in\R^3$ as a row or column vector will be clear from context. Define the equivalence relation $\sim$ on $A$ by writing $v=(\alpha_1,\alpha_2,\alpha_3),v'=(\alpha_1',\alpha_2',\alpha_3')$. Then \begin{center}
$v\sim v'$ if and only if, $\alpha_i\equiv -\alpha_{\sigma^{-1}(i)}' \mod 1$ or $\alpha_i\equiv -\alpha_{\sigma^{-1}(i)}' \mod 1$, $\sigma\in\Sigma_3$  
\end{center} 
and say that $v$ is a \emph{re-expression} of $v'$ if $v\sim v'$. The reader can verify that $\sim $ is an equivalence relation that identifies points in $A$ to shapes in $D$. We provide a geometric explanation of $\sim$. Let $T=z_1z_2z_3$ be a triangle with shape $(\alpha_1,\alpha_2,\alpha_3)\in A_p$. Permuting the vertices of $T$ by any $\sigma\in\Sigma_3$ will not change the shape of $T$. The induced $\Sigma_3$-action on $\R^3$, generated by
\begin{equation} 
\label{permmat}
P_{12}=\begin{bmatrix}
0 & 1 & 0 \\
1 & 0 & 0 \\
0 & 0 & 1
\end{bmatrix}, P_{13}=
\begin{bmatrix}
0 & 0 & 1 \\
0 & 1 & 0 \\
1 & 0 & 0
\end{bmatrix},
P_{23}=\begin{bmatrix}
1 & 0 & 0 \\
0 & 0 & 1 \\
0 & 1 & 0
\end{bmatrix}
\end{equation} leaves $A$ invariant, reflecting across the planes $x-y=0,x-z=0,y-z=0$. These planes intersect $A$ through the medians of $A_p$. Thus $\{\sigma D\mid \sigma\in \Sigma_3\}$ tiles $A_p$ by $6$ copies of $D$ and $v\sim w$ in $A_p$ if and only if $v=\sigma w$ for some $\sigma\in\Sigma_3$. 

Identifing points between $A_p$ and $A\setminus A_p$ requires two separate identifications. Let $(\alpha_1,\alpha_2,\alpha_3)\in A$. If we treat $\alpha_i\geq 0$ as rotation about the vertex $z_i$ (either clockwise of counterclockwise, depending on $T$) that takes the line $\overleftrightarrow{z_iz_{i+1}}$ to $\overleftrightarrow{z_iz_{i+2}}$ (indices modulo $3$), then $\alpha_i\pm k, k\in \Z$, describes another rotation with the same orientation as $\alpha_1$ that takes $\overleftrightarrow{z_iz_{i+1}}$ to $\overleftrightarrow{z_iz_{i+2}}$. Notice that the difference of two vectors in $A$ lie in the orthogonal hyperplane $(1/3,1/3,1/3)^{\perp}$. So  two vectors $v,v'$ in $A$ describe the same similarity class if $v-v'\in \Z\{e_1-e_2,e_2-e_3\}$; we recover interior angles by $\alpha_i\equiv \alpha_i' \mod 1$.  So $v\sim v'$. Notice further, if $(\alpha_1,\alpha_2,\alpha_3)\in A$ then rotation about $z_i$ by either $-\alpha_i$ (opposite orientation as $\alpha_i$) or $1-\alpha_i$ (same orientation as, but supplementary to, $\alpha_i$) takes $\overleftrightarrow{z_iz_{i+2}}$ to $\overleftrightarrow{z_iz_{i+1}}$. Ensuring that the angles sum to $1$, we make the identification \begin{equation}
\label{orient1}
(\alpha_1,\alpha_2,\alpha_3) \sim (-\alpha_1,1-\alpha_2,1-\alpha_3).
\end{equation} 
In this case, we recover the interior angles of $T$ by $\alpha_i\equiv -\alpha_i' \mod 1$.
Substituting $\alpha_1+\alpha_2+\alpha_3=1$ into \eqref{orient1},  
\begin{equation}
(\alpha_1,\alpha_2,\alpha_3) \sim (-\alpha_1,\alpha_1 + \alpha_3,\alpha_1+\alpha_2).
\end{equation} or  
\[\begin{bmatrix}
\alpha_1\\ 
\alpha_2\\
\alpha_3
\end{bmatrix} \sim 
\begin{bmatrix}
-1 & 0 & 0 \\
1 & 0 & 1 \\
1 & 1 & 0
\end{bmatrix}
\begin{bmatrix}
\alpha_1\\ 
\alpha_2\\
\alpha_3
\end{bmatrix}= R_{\alpha}\begin{bmatrix}
\alpha_1\\ 
\alpha_2\\
\alpha_3
\end{bmatrix}.\]
\begin{figure}
\label{CCCMAS}
\begin{center}
\scalebox{.45}{\includegraphics{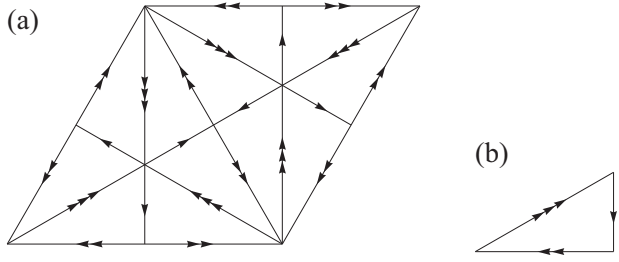}}
\end{center}
\caption{(a) An unfolded part of the moduli space $A/G.$ (b) fundamental domain $D$. }
\end{figure}
\noindent We have found the matrix $R_{\alpha}$ in \eqref{equ:aref}.  Making similar observations for $\alpha_2, \alpha_3$, we identify points in $A$ via the actions of
\begin{align}
\label{orientref}
R_{\alpha} &= \begin{bmatrix}
-1 & 0 & 0 \\
1 & 0 & 1 \\
1 & 1 & 0
\end{bmatrix} =
P_{23}\begin{bmatrix}
-1 & 0 & 0 \\
1 & 1 & 0 \\
1 & 0 & 1
\end{bmatrix} \\
\label{orientref2}
R_{\beta} &= \begin{bmatrix}
0 & 1 & 1 \\
0 & -1 & 0 \\
1 & 1 & 0
\end{bmatrix} =
P_{13}
\begin{bmatrix}
1 & 1 & 0 \\
0 & -1 & 0 \\
0 & 1 & 1
\end{bmatrix} \\
\label{orientref3}
R_{\gamma} &= \begin{bmatrix}
0 & 1 & 1 \\
1 & 0 & 1 \\
0 & 0 & -1
\end{bmatrix} =
P_{12}
\begin{bmatrix}
1 & 0 & 1 \\
0 & 1 & 1 \\
0 & 0 & -1
\end{bmatrix}
\end{align}
Geometrically, $R_{\alpha}$ is $P_{23}$ composed with a reflection in $\R^3$ across the plane that passes through $x=0$ and leaves $A$ invariant.
Notice that $R_{\alpha}^2=R_{\beta}^2=R_{\gamma}^2=id,$ and $R_{\beta}R_{\alpha},R_{\gamma}R_{\beta}$ translates $A$ by $e_1-e_2,e_2-e_3$ respectively. If we let $$R=\begin{bmatrix}
1 & 0 & 1 \\
0 & 1 & 1 \\
0 & 0 & -1
\end{bmatrix}$$ then $R$ acts on $A$ by reflection across the line $z=0$ in $A$. The reader can quickly verify that $R_{\alpha} = P_{23}P_{13}RP_{13}, R_{\beta}= P_{23}RP_{13}P_{12}, R_{\gamma}= P_{12}R$.  If we let $G$ be the group generated by the involutions $P_{13},P_{23},$ and $ R$, then each $g\in G$ leaves $A$ invariant. Each generator of $G$ reflects $D$ along one of its boundary edges and the $G$-action on $A$ is generated by reflections across lines that are intersections of $A$ with the planes 
\begin{equation}
\label{equ:refllines}
x\in\Z,y\in\Z,z\in\Z,x-y\in \Z,y-z\in \Z,x-z\in\Z
\end{equation} 
We have shown 
\begin{center}
$v\sim w$ if and only if $w=gv$ for some $g\in G$. 
\end{center}In other words, the $G$-orbit $Gv$ is the set of all re-expressions of $v\in A$. Under these identifications, the set of orbits $A/G$ is a quotient of $A$ by the rank $2$ lattice $\Lambda:=e_1+\Z\{e_1-e_2,e_2-e_3\}$, along with the $12$ identitfications from $v\sim R_{\alpha}v$, $v\sim \sigma v\ (\sigma\in \Sigma_3)$ as seen in Figure \cref{CCCMAS}(a). Every $G$-orbit $O$ has exactly one representative $p\in D$, so we call $D$ the {\emph{fundamental domain}} of  $A/G$. Observe $GD:=\{gp\mid g\in G, p\in D\} = A$ and $D$ is the (closed) $30-60-90$ triangle with vertices at $v_1=b=(1/3,1/3,1/3)$ and $v_2=(1,0,0)$, $v_3=(1/2,1/2,0)$. Thus the $G$-action on $A$ is that of the wallpaper group $p6m$.  For any $v\in A$, the {\emph{point group}} $H_v\subset G$ is the subgroup of $G$ that stabilizes $v$. As $GD=A$ and $gH_vg^{-1} = H_{gv}$,  $H_v$ is conjugate to exactly one $H_p$, $p\in D$. Since $G$ is generated by reflections along the $3$ boundary lines of $D$, $H_p$ is isomorphic to a dihedral group. Moreover, $|H_p|= 1,2,4,6$ or $12$ depending on whether $p$ lies, respectively, in the interior $D$, on an edge of $D$ (but not a vertex), or $p= v_3,b,$ or $v_2$. The next key lemma, the {\emph{local-point group property}} articulates the $G$-action on $A$. Denote by $B_{\epsilon}(v)$ the ball of radius $\epsilon$ centered at $v$.
\begin{theorem}
\label{thm:local prop}[Local point group property]
Let $G=p6m$ act on $A$ by reflection across the lines in \eqref{equ:refllines}. There exists a constant $\epsilon_0>0$, depending only on $G$, such that, if $v_1,\ldots v_k\in A$ are contained in some orbit and some ball $B_{\epsilon_0}(v)$ then $v_1,\ldots v_k$ are elements in the orbit of some point group $H_v$ of $G$. 
\end{theorem}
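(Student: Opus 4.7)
My plan is to treat this as a Euclidean Margulis-type statement for the cocompact discrete action of $G = p6m$ on $A \cong \R^2$: at small enough scales, every element of $G$ that barely moves a point must fix some common nearby point, and hence lies in one finite point group $H_w$. I would break the argument into three steps, each of which further constrains the candidate $\epsilon_0$.

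First, I would separate group elements by fixed-set type. Every $g \in G$ is a translation, a glide reflection, a rotation, or a reflection. Since $G$ is discrete and cocompact, the minimum translation length $\lambda_T$ and minimum glide length $\lambda_G$ are both strictly positive. If $\epsilon_0 < \tfrac{1}{4}\min(\lambda_T, \lambda_G)$, then any $g \in G$ with $d(v, gv) < 2\epsilon_0$ cannot be a translation or a glide, so $g$ must have a fixed point or fixed line in $A$.

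Next I would localize those fixed sets and bundle them into a single stabilizer. For a rotation by $\theta$ around $p$, $d(v, gv) = 2|v - p|\sin(\theta/2) \geq |v - p|$, since the rotation angles in $p6m$ are all at least $60^\circ$; for a reflection across $\ell$, $d(v, gv) = 2\operatorname{dist}(v, \ell)$. Thus every short-displacement rotation has its center within $2\epsilon_0$ of $v$, and every short-displacement reflection line passes within $\epsilon_0$ of $v$. I would then extract two further positive constants from the $p6m$ tiling: $d_R$, the minimum distance between distinct rotation centers, and $d_L$, the minimum distance from a rotation center to a reflection line not passing through it. Shrinking $\epsilon_0$ below a fixed fraction of $\min(d_R, d_L)$, a short case analysis (whether $v$ is close to a unique rotation center, or only to a single reflection line, or to neither) produces a point $w = w(v) \in A$ such that
\[
\{g \in G : d(v, gv) < 2\epsilon_0\} \;\subseteq\; H_w.
\]

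The theorem then follows quickly. Given $v_1, \dots, v_k \in B_{\epsilon_0}(v)$ contained in a single $G$-orbit, pick $g_i \in G$ with $g_1 = e$ and $g_i v_1 = v_i$. Using the triangle inequality and the fact that $g_i$ is an isometry,
\[
d(v, g_i v) \leq d(v, v_i) + d(g_i v_1, g_i v) = d(v, v_i) + d(v_1, v) < 2\epsilon_0,
\]
so each $g_i$ lies in $H_w$ for the common point $w$ produced in the previous step. Hence every $v_i = g_i v_1 \in H_w v_1$, so $v_1, \dots, v_k$ is contained in the single $H_w$-orbit of $v_1$, as desired.

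The main obstacle is the middle step: one must verify that the constants $d_R$ and $d_L$ attached to $p6m$ are genuinely positive and then carry out the case analysis tightly enough that, in every configuration of nearby reflection lines and rotation centers in the hexagonal mesh, the short-displacement elements all fix a common point. This is a direct but slightly finicky inspection of the $p6m$ reflection pattern, and it is precisely where the structure of the wallpaper group (rather than some generic discrete action) has to be used, particularly to rule out two nearby reflection lines whose intersection point would otherwise fall outside the controlled region.
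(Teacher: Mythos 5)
Your argument is correct in substance but takes a genuinely different route from the paper. The paper works entirely inside the fundamental domain $D$: around each vertex $v_i$ of $D$ it finds a radius $c_i$ such that $B_{c_i}(v_i)$ is contained in the star $H_{v_i}D$, on which $G$-equivalence reduces to $H_{v_i}$-equivalence, and then shows that the shrunken balls $B_{c_i-\epsilon_0}(v_i)$ still cover $D$ (via the barycenter of $D$ and convexity), which yields the explicit constant $\epsilon_0=\frac{3-\sqrt{7}}{3\sqrt{6}}$. You instead run a Euclidean Margulis-type argument on the group elements themselves: bound the displacement $d(v,gv)$ from below for translations and glides, localize the fixed sets of short rotations and reflections near $v$, and use the discreteness constants of the $p6m$ mesh to force every short element into a single stabilizer $H_w$; your reduction of the orbit statement to the displacement statement via $d(v,g_iv)\le d(v,v_i)+d(v_1,v)<2\epsilon_0$ is clean and correct, and matches the paper's intended reading of ``some point group $H_v$'' (a nearby point, not necessarily the ball's center). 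What your approach buys is generality and transparency: it works verbatim for any crystallographic group in any dimension and isolates exactly why the statement is true (proper discontinuity plus finiteness of point groups), whereas the paper's approach buys an explicit, computable $\epsilon_0$ tied to the geometry of $D$. The one place you defer --- the case analysis forcing a common fixed point --- does go through, but to close it you should record two facts: distinct parallel mirrors of $G$ compose to a translation, so they are at least $\lambda_T/2$ apart and your standing bound $\epsilon_0<\lambda_T/4$ already forbids two of them from both passing within $\epsilon_0$ of $v$; and non-parallel mirror directions in $p6m$ meet at angle at least $\pi/6$, so two such mirrors within $\epsilon_0$ of $v$ intersect in a rotation center within a bounded multiple of $\epsilon_0$ of $v$, which your constants $d_R$ and $d_L$ then force to coincide with the unique candidate for $w$.
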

\begin{proof}
Since each $g\in G$ is an isometry and $GD=A$, we need only consider balls with center in the fundamental domain $D$. Consider the vector $v_1 = (1/3,1/3,1/3)$, and let $c_1=\sqrt{1/6}$ so that $B_{c_1}(v_1)$ is tangent to the line $\overleftrightarrow{v_2v_3}$. Since $B_{c_1}(v_1)$ is contained in the region $H_{v_1}D$ and $H_{v_1}$ acts transitively on $H_{v_1}D$, any collection of points in $B_{c_1}(v_1)$ that are $G$-equivalent must be $H_{v_1}$-equivalent. Thus, $G$-equivalence implies $H_{v_1}$ equivalence for any collection of points in a ball of radius $B_{\epsilon}(w)$, so long as $0<\epsilon<\sqrt{1/6}$ and $w\in B_{c_1-\epsilon}(v_1)$. We can make analogous conclusions for $v_2, c_2=\sqrt{1/2}$ and $v_3,c_3=\sqrt{1/8}$. It remains to show $B_{c_1-\epsilon_0}(v_1),B_{c_2-\epsilon_0}(v_2),B_{c_3-\epsilon_0}(v_3)$ cover $D$ for some $\epsilon_0>0$. Consider the barycenter $b=(11/18,5/18,2/18)$ of $D$, and let $d_1,d_2,d_3$ be the distances from $b$ to $v_1,v_2,v_3$ respectively. Explicit calculation shows $d_i<c_i, i =1,2,3$. By convexity, $B_{d_i}(v_i)$ contains the convex hull of $v_i,(v_i+v_{i+1})/2,(v_i+v_{i+2})/2$ and $b$, $i =1,2,3$, indices mod $3$. These $3$ convex regions cover $D$, seen by expressing $w\in D$ as a convex combination of $v_1,v_2,v_3$. Thus $\epsilon_0 = \min\{c_1-d_1,c_2-d_2,c_3-d_3\} = c_1-d_1 =\frac{3-\sqrt{7}}{3\sqrt{6}}$.  The theorem follows. 
\end{proof}
\remark{\Cref{thm:local prop} shows that $G$ acts properly discontinuously on $A$, giving $D$ is has the structure of the orbifold $A/G$ with covering space $A$ \cite[Thm 13.2.1]{Th02}.}. Each open set $U\subset D$ in the relative topology lifts to an open set $GU \subset A$. The identifications unfold $D$ along the lines of reflection in \eqref{equ:refllines}. 

\section{Linear maps on $A/G$}
\label{sec:trimaps}
A function $M:A\to A$ {\emph{preserves re-expression}} if $v\sim w$ implies $Mv\sim Mw$. Any function that preserves re-expression induces a map on $A/G$, hence a triangle map on $\mathcal{T}/S$. We say that a triangle map $f$ is \emph{linear} if $f:D \to D$ is the quotient map of an invertible linear map $M:\R^3\to \R^3$ that leaves $A$ invariant and the restriction of $M$ to $A$ preserves re-expression. We shall henceforth refer to $M$ as an {\emph{angle transition matrix}}, writing ATM for short. Since $M$ leaves $A$ invariant, each column of an ATM sums to $1$. Recall that $A_p$ is the equilateral triangle in $A$ with vertices $e_1,e_2,e_3$. As $M$ is invertible, linear, and leaves $A$ invariant, $MA_p$ is a nonflat triangle in $A$ and the columns of $M$ are the vertices of $MA_p$.  The geometry of the moduli space $A/G$ (\Cref{thm:local prop}) imposes strong conditions on the type of triangle $MA_p$ can be, yielding a classification.\\ 
Our first theorem requires two observations. Define $\Lambda = e_1 + \Z\{e_2-e_1,e_3-e_2\}$ to be the rank two sublattice of $A$ consisting of integer entries. Notice that each $v\in\Lambda$ is met by $6$ lines of \eqref{equ:refllines}. Since $\Lambda = Ge_1$ and $H_{e_1}$ is the only point group from $D$ with order $12$, $|H_v|=12$ if and only if $v\in\Lambda$. Take any point group $H_v$ of $G$ and consider an $H_v$-orbit, $O$. The average $a:=\frac{1}{|O|}\sum_{w\in O}w$ is a fixed point of $H_v$.  If $|H_v|>2$, then $v$ is the only fixed point of $H_v$, so $a = v$. 
\begin{lemma}
\label{thm:lambda}
For any ATM $M$, $M\Lambda\subset \Lambda$. 
\end{lemma}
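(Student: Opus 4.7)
The plan is to characterize membership in $\Lambda$ via the point-group order and then transport stabilizers through $M$ using \Cref{thm:local prop}. From the observations preceding the lemma, $|H_v|=12$ if and only if $v\in\Lambda$, so it suffices to show that $|H_{Mv}|=12$ whenever $v\in\Lambda$.

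Fix $v\in\Lambda$, so $|H_v|=12$, and let $\epsilon_0$ be the constant from \Cref{thm:local prop}. Because $M$ is linear, I would first choose $\delta>0$ with $\|Mu-Mv\|<\epsilon_0$ whenever $u\in A$ satisfies $\|u-v\|<\delta$. Next I would pick a \emph{generic} point $w\in B_\delta(v)\cap A$ whose $H_v$-stabilizer is trivial. Such $w$ exist densely because the nontrivial elements of the dihedral group $H_v$ fix only $v$ (the rotations) or a single line through $v$ (each of the six reflections), and the union of these $7$ sets is nowhere dense in $A$. For such $w$ the $H_v$-orbit consists of exactly $12$ distinct points, all contained in $B_\delta(v)$.

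Applying $M$ produces the twelve points $\{Mhw : h\in H_v\}$. They are pairwise distinct (since $M$ is injective), pairwise $\sim$-equivalent (since $hw\sim w$ and $M$ preserves re-expression), and all contained in $B_{\epsilon_0}(Mv)$ by the choice of $\delta$. Invoking \Cref{thm:local prop} at the center $Mv$ then forces these twelve $G$-equivalent points to lie in a single $H_{Mv}$-orbit. An $H_{Mv}$-orbit has cardinality at most $|H_{Mv}|$, so $|H_{Mv}|\geq 12$, and since no point group of $G$ has order greater than $12$, we conclude $|H_{Mv}|=12$ and hence $Mv\in\Lambda$, as required.

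I do not anticipate a substantive obstacle: the only delicate bookkeeping is the genericity choice of $w$, which rests on the elementary observation that each nontrivial element of the finite dihedral stabilizer $H_v$ fixes a proper affine subset of $A$. The remaining content is a routine use of continuity and invertibility of $M$ together with the preservation of $\sim$; in particular I avoid any coordinate description of $M$ and work purely through its stabilizer-transport properties.
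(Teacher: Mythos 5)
Your overall strategy is the same as the paper's: pick a full $12$-element orbit of $H_v$ inside a small ball around $v\in\Lambda$, push it through $M$, and apply \Cref{thm:local prop} to the twelve distinct, pairwise $\sim$-equivalent image points. Your genericity argument for producing the $12$-element orbit is fine and is essentially what the paper means when it asserts ``such an $O$ exists.''

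There is, however, a genuine gap in your last step. You invoke \Cref{thm:local prop} ``at the center $Mv$'' and conclude that the twelve image points lie in a single $H_{Mv}$-orbit. But the theorem, as actually proved, does not deliver the point group of the ball's \emph{center}: its proof covers $D$ by balls around the three vertices $v_1,v_2,v_3$ of $D$ and shows that $G$-equivalence inside $B_{\epsilon_0}(w)$ implies equivalence under $H_{u}$ for \emph{some} nearby high-symmetry point $u$ (a $G$-translate of one of the $v_i$), not under $H_w$. (Read literally with $w$ generic, ``the points lie in an orbit of $H_w$'' would be false, since then $H_w$ is trivial.) So what you actually get is that the twelve points form an orbit of some $H_u$ with $|H_u|=12$, hence $u\in\Lambda$ --- but you have not shown $u=Mv$. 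The paper closes exactly this gap with an averaging argument: the centroid of an $H_u$-orbit is a fixed point of $H_u$, and since $|H_u|>2$ the only fixed point is $u$ itself; on the other hand, by linearity of $M$ the centroid of your twelve points $Mhw$ is $M$ applied to the centroid of the $hw$, which is $Mv$ (here it matters that your chosen orbit averages to $v$, which your generic orbit does). Hence $u=Mv\in\Lambda$. Adding this one paragraph would make your proof complete and essentially identical to the paper's.
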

\begin{proof}
Let $\lambda\in\Lambda$ be arbitrary. Since the point group $H_{\lambda}$ is $G$-conjugate to $H_{e_1}$, $|H_{\lambda}|=12$. Let $\epsilon_0$ be the constant of Theorem \ref{thm:local prop}. As $M$ is linear, $M$ is uniformly continuous. Thus, there exists $\delta > 0$ such that, for all $w_1,w_2\in A$, $|w_1-w_2|<\delta$ implies $|Mw_1-Mw_2|<\epsilon_0$.  Select any orbit $O$ of $H_{\lambda}$ that has $12$ elements $p_1,\ldots,p_{12}$ contained in $B_{\min\{\epsilon_0,\delta\}}(\lambda)$ which average to $\lambda$; such an $O$ exists. As $M$ is invertible, $Mp_1,\ldots,Mp_{12}$ are $12$ distinct points contained in $B_{\epsilon_0}(M\lambda)$. Since $M$ preserves re-expression, $MGp\subset GMp$, so $Mp_1,\ldots,Mp_{12}$ belong to some $G$-orbit. By Theorem  \ref{thm:local prop}, $MO = \{Mp_1,\ldots,Mp_{12}\}$ belongs to an orbit of some point group $H_v$. By the orbit-stabilizer theorem, the cardinality of an $H_v$-orbit is at most $|H_v|$. Since $|H_v|\leq 12$ and $|MO|=12$, $|H_v|=12$. The preceding paragraph shows $v\in\Lambda$, and that the average
$$a=\frac{1}{12}\sum_{i=1}^{12} Mp_i = M\left(\frac{1}{12}\sum_{i=1}^{12} p_i\right) = M\lambda$$ must be equal to $v$. Thus $M\lambda \in \Lambda$, proving the proposition.
\end{proof}
Specilaizing \Cref{thm:lambda} to $e_1,e_2,e_3\in\Lambda$ shows that the entries of $M$ must be integers. Now, let $R\subset A$ be the set of all $w\in A$ with nontrivial point group, meaning $w\in R$ if and only if $w$ is fixed point of at least one reflection in $G$. Thus, $R$ is the union of lines described in \eqref{equ:refllines}. 
\begin{lemma}
\label{thm:edges}
For any ATM $M$, $MR\subset R$.
\end{lemma}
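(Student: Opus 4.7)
The plan is to follow the template of \Cref{thm:lambda}: produce a small orbit near $w$, push it forward by $M$, and use the local-point-group property to pin down the image $Mw$. Since $w \in R$, some reflection $r \in G$ fixes $w$; let $\ell \subset A$ be its axis. I will probe $Mw$ by considering the $\{e, r\}$-orbit of a point $p_1 = w + tv_0$, where $v_0 \in A$ is a unit vector perpendicular to $\ell$ and $t > 0$ is small, so that $p_2 := rp_1 = w - tv_0$ and the midpoint of $p_1$ and $p_2$ is exactly $w$.

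Because $p_1 \sim p_2$ via $r$ and $M$ preserves re-expression, $Mp_1 \sim Mp_2$; because $M$ is invertible, $Mp_1 \ne Mp_2$. Thus there exists $g \in G \setminus \{e\}$ with $gMp_1 = Mp_2$. By uniform continuity of the linear map $M$, for $t$ sufficiently small, $Mp_1$ and $Mp_2$ lie in any prescribed neighborhood of $Mw$. Using the proper discontinuity of $G$ on $A$ (noted in the remark after \Cref{thm:local prop}), I can choose a neighborhood $U$ of $Mw$ small enough that any $h \in G$ with $hU \cap U \ne \emptyset$ already lies in $H_{Mw}$. Since both $Mp_1$ and $gMp_1 = Mp_2$ lie in $U$, this forces $g \in H_{Mw}$, so in particular $H_{Mw}$ is nontrivial.

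To finish, I observe that in $p6m$ every nontrivial point group is dihedral of order $2, 4, 6,$ or $12$, so it contains at least one reflection of $G$. Hence any $v$ with $H_v \ne \{e\}$ lies on a reflection line, and in particular $Mw \in R$. The main subtlety is the invocation of proper discontinuity: matching $t$ to a neighborhood radius that separates distinct $G$-orbits locally requires the quantitative constant $\epsilon_0$ of \Cref{thm:local prop}, not merely its qualitative statement.
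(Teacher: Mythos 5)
Your proof is correct and follows essentially the same route as the paper's: the paper's own argument is a two-sentence sketch citing uniform continuity, preservation of re-expression, and the local point group property (\Cref{thm:local prop}), and your write-up is exactly the natural fleshing-out of that sketch in the style of the proof of \Cref{thm:lambda}. The only point worth noting is that your final step (``every nontrivial point group contains a reflection, hence $Mw\in R$'') is precisely the identification the paper builds into its definition of $R$, so nothing further is needed.
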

\begin{proof}
The point group of any $w\in A\setminus R$ is trivial. Since $M$ is uniformly continuous and preserves re-expression, we must have $MR\subset R$ by \Cref{thm:local prop}.
\end{proof}
The local point group property (Theorem \ref{thm:local prop}) imposed a strong condition on $M$ by looking at the corners of $A_p$; the next proposition examines the edges of $A_p$. Recall that $v_1 = b=(1/3,1/3,1/3)$ is the barycenter of $A_p$.
\begin{prop}
\label{thm:equil}
The columns of $M$ are vertices of an equilateral triangle in $A$. Moreover, a subgroup of $H_{Mb}\subset G$ permutes the columns of $M$    
\end{prop}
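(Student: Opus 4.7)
The plan is to exploit the point group $H_b$ at the barycenter $b=(1/3,1/3,1/3)$ of $A_p$ and transport it through $M$ via the local point group property, producing a subgroup of $H_{Mb}$ whose rotational part cyclically permutes the columns of $M$.

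First I would establish $|H_{Mb}|\ge 6$ by a small-orbit argument modeled on the proof of \Cref{thm:lambda}. Let $\epsilon_0$ be the constant of \Cref{thm:local prop}, and use uniform continuity of $M$ to select $\delta>0$ with $M(B_{\delta}(b)\cap A)\subset B_{\epsilon_0}(Mb)$. Choose a generic $p\in B_{\delta}(b)\cap A$ off every reflection line of $H_b$, so that $|H_b\cdot p|=6$. The six points of $H_b\cdot p$ are pairwise $G$-equivalent, so by preservation of re-expression their $M$-images are pairwise $G$-equivalent, and they lie inside $B_{\epsilon_0}(Mb)$. By \Cref{thm:local prop} these images form an orbit of some point group $H_v$; since $M$ is injective the orbit has $6$ elements, forcing $|H_v|\ge 6$. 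Because $|H_v|>2$, the unique fixed point of $H_v$ equals the average of the orbit, and by linearity of $M$ this average equals $Mb$. Thus $v=Mb$ and $|H_{Mb}|\ge 6$.

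Next I would upgrade this to a homomorphism $\tau\colon H_b\to H_{Mb}$ intertwining $M$. For each $\sigma\in H_b$ and each generic $p$ near $b$, the preceding paragraph supplies a unique $g(\sigma,p)\in H_{Mb}$ with $g(\sigma,p)\cdot Mp=M\sigma p$; uniqueness holds because $Mp$ has trivial $H_{Mb}$-stabilizer for generic $p$. Continuity in $p$ together with finiteness of $H_{Mb}$ forces $g(\sigma,p)$ to be locally constant, so it defines a single element $\tau(\sigma)\in H_{Mb}$. The identity $\tau(\sigma)\circ M=M\circ\sigma$ then holds on a nonempty open subset of $A$; both sides are affine maps $A\to A$ (since $H_b$ acts on $A$ through the Euclidean isometry group), so the identity extends to all of $A$, and $\sigma\mapsto\tau(\sigma)$ is a group homomorphism.

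Finally I would apply $\tau$ to the $120^{\circ}$ rotation $\rho\in H_b$, which cyclically permutes $e_1,e_2,e_3$. The element $\tau(\rho)\in H_{Mb}\subset G$ is an isometry of $A$ fixing $Mb$ with order dividing $3$, and it satisfies $\tau(\rho)Me_i=Me_{\rho(i)}$. It cannot be the identity (else $Me_1=Me_2$, contradicting invertibility of $M$), so it is a $120^{\circ}$ rotation about $Mb$. Three points cyclically permuted by such a rotation are equidistant from $Mb$ and mutually equidistant, so $Me_1,Me_2,Me_3$ form an equilateral triangle, and the cyclic subgroup $\langle\tau(\rho)\rangle\subset H_{Mb}$ permutes the columns of $M$. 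The main obstacle is verifying that $\tau(\sigma)$ is genuinely well-defined: this fuses the local point group property (to force small $G$-equivalent clusters near $Mb$ to be $H_{Mb}$-orbits) with uniform continuity of $M$ (to keep the entire orbit $H_b\cdot p$ inside such a cluster). The extension of $\tau(\sigma)M=M\sigma$ from a neighborhood of $b$ to all of $A$ is routine by affineness, but it is essential, since the payoff applies the identity at the far-away corners $e_i$ rather than near $b$.
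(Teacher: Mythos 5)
Your proof is essentially correct but follows a genuinely different route from the paper's. The paper argues through \Cref{thm:edges}: since $\partial A_p\subset R$ and the medians of $A_p$ lie in $R$, the edges and medians of $MA_p$ must lie on reflection lines, which forces the angles of $MA_p$ to be multiples of $\pi/6$ and then rules out the two non-equilateral angle triples; the medial reflections of the resulting equilateral triangle are elements of $G$ fixing $Mb$ and permuting the columns. You instead transport the order-$6$ point group $H_b$ through $M$, in the style of the paper's proof of \Cref{thm:lambda}, to obtain the intertwining identity $\tau(\sigma)M=M\sigma$, and read off both conclusions from the image of the $120^{\circ}$ rotation. This is arguably the more conceptual argument: it shows $MH_bM^{-1}\subset H_{Mb}$ outright (so the permuting subgroup is an explicit copy of $\Sigma_3$, with the medial reflections appearing as $\tau$ of the transpositions), it bypasses \Cref{thm:edges} and the angle enumeration entirely, and it exposes the common mechanism behind \Cref{thm:lambda} and this proposition, namely that $M$ conjugates point groups into point groups. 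The paper's version buys a shorter, picture-driven argument that reuses only the two lemmas already proved. One imprecision in your first step should be repaired: \Cref{thm:local prop} only places the six images \emph{inside} an orbit of some point group $H_v$, not as a full orbit, so if $|H_v|=12$ the average of your six points need not equal $v$, and the early claim $v=Mb$ does not yet follow (unlike in \Cref{thm:lambda}, where the cardinality count $12=|H_v|$ closes this). This does not sink the proof: once $\tau(\sigma)M=M\sigma$ is extended to all of $A$ you get $\tau(\sigma)Mb=M\sigma b=Mb$ for free, so $\tau(\sigma)\in H_{Mb}$ a posteriori; you should lean on that and drop the premature identification of $v$ with $Mb$, noting that for uniqueness of $g(\sigma,p)$ it suffices that the full $G$-stabilizer of $Mp$ is trivial, i.e., $Mp\notin R$, which holds for generic $p$.
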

\begin{proof}  
By \Cref{thm:lambda} the vertices of $M A_p$ lie in $\Lambda$. Since the boundary of $A_p$ is a subset of $R$, \Cref{thm:edges} states that each edge of $MA_p$ must lie on one of the $12$ lines of reflection through a vertex of $MA_p$. Taking pairs of edges, the interior angles of $MA_p$ are positive integer multiples of $\pi/6$. Up to permutation, the angles of $MA_p$ are either $(\pi/6,\pi/3,\pi/2),(\pi/3,\pi/3,\pi/3)$, or $(\pi/6,\pi/6,2\pi/3)$. Since a point $v\in A$ on a median of $A_p$ is fixed by the reflection in $G$ that transposes the equal components, $v\in R$. As $MR\subset R$, $Mv\in R$. By linearity, \Cref{thm:edges} shows $MA_p$ cannot have an angle of $\pi/6$. So the vertices of $MA_p$ must form an equilateral triangle. Again by linearity, the medians of $MA_p$ are concurrent at $Mb$. Since $MA_p$ is equilateral, reflections about its medians permute the vertices of $MA_p$, fixing $Mb$. By \Cref{thm:edges}, each median of $MA_p$ lies on a line in $R$, so the medial reflections are elements of $G$. So $H_{Mb}$ contains the subgroup of medial reflections which permute the vertices of $MA_p$.
\end{proof}
\begin{definition}
A $3\times 3$ matrix $M$ is \emph{circulant} if
$$M= \begin{bmatrix}
c_0 & c_1 & c_2 \\
c_2 & c_0 & c_1 \\
c_1 &  c_2 & c_0 \\
\end{bmatrix}$$ and $M$ is \emph{symmetric} if $M^T=M$.
\end{definition}
If $M$ is circulant and symmetric, then $M$ looks like
$$M= \begin{bmatrix}
c_0 & c_1 & c_1 \\
c_1 & c_0 & c_1 \\
c_1 &  c_1 & c_0 \\
\end{bmatrix}.$$
To complete the classification, notice that each element $g\in G$ is the rather uninteresting ATM that reexpresses the angles of a triangle. Since each $g\in G$ corresponds to the identity on $\mathcal{T}/S$, we say $M$, $gM$ are \emph{equivalent} ATM's. As $|H_{b}|=6$, uniform continuity of $M$ and \Cref{thm:local prop} imply $|H_{Mb}|=6$ or $12$.  We are now ready to prove the main Theorem.

\begin{proof}[Proof of \Cref{thm:circsym}]
We draw the boundary of $MA_p$, citing appropriate theorems along the way. By \Cref{thm:lambda}, $Me_1\in\Lambda$. As $MR\subset R$ (\Cref{thm:edges}),  $Me_1-Me_2$ is a integer multiple of one of the $12$ directions 
\[e_1-e_2, e_2-e_3, e_3-e_1, -2e_1+e_2+e_3, e_1-2e_2+e_3, e_1+e_2-2e_3 \] that span lines of reflection through $Me_1$. By  \Cref{thm:equil} $MA_p$ is equilateral whose medians lie in $R$.  Computing the angles between any of the $12$ vectors above, replacing $M$ by $hM$ if necessary $(h\in H_{Mb})$, the boundary directions take one of the two following forms:
\begin{case}
$Me_1-Me_2 = k(e_1-e_2),Me_2-Me_3 = k(e_2-e_3),Me_3-Me_1 = k(e_3-e_1)$. We separate into subcases based on whether $|H_{Mb}|=6$ or $12$. 
\end{case} \begin{subcase}
$|H_{Mb}|=6$. \end{subcase}
Recall that $b$ is the only point in the fundamental domain $D$ with point group of order $6$,  Thus $|H_{Mb}|=6$ implies $Mb\in Gb$, so there exists $g\in G$ such that $gM$ is matrix with $b$ as a fixed point. By \Cref{thm:lambda}, $gMe_1\in\Lambda$, and by \Cref{thm:edges}, the medial line from $gMb=b$ to $gMe_1$ must lie on one of the lines $x-y = 0,x-z=0,y-z=0$ in $R$ through $b$. Thus, $gMe_1$ has integer components (\Cref{thm:lambda}) with two components equal. By \Cref{thm:equil}, $H_{Mb}$ pemutes the columns $gMe_1$ $gMe_2$, $gMe_3$, and $H_{Mb} = H_b$ is the group of permutation matrices in $\R^3$ \eqref{permmat}. Thus there exists $Q\in H_b$ such that $QgM$ is circulant and symmetric, i.e., a Type I matrix \eqref{equ:goodmat}.  
\begin{subcase}
\label{1.2} $|H_{Mb}|=12$. 
\end{subcase} Thus $Mb\in\Lambda$. Since the translation subgroup $\Z\{e_1-e_2,e_2-e_3\}$ acts transitively on $\Lambda$, there exists a \emph{translation} $t\in G$ sending $Mb$ to $e_3$. Consider the subset of $R'\subset R$ of reflection lines given by  \[x-y\in \Z,y-z\in \Z,x-z\in\Z.\]  Recall $w=(1/3,1/3,-2/3)$ and the translation $T_w$ sending $e_3$ to $b$. Notice that $T_{w}$ leaves $A$ invariant but $T_w\notin G$. Nevertheless, straightforward calculations verify that the medians of $MA_p$, by the assumption of column differences, are subsets of $R'$. More straightforward calculations show $tR'=R'$, $T_{w}R'=R'$, and that $T_{w}\Lambda$ is the subset of elements whose point group is order $6$, consisting of triples of third integers $(a/3,b/3,c/3)$ in $A$ with $a\equiv b\equiv c \equiv 1$ mod  $3$. Thus, the vertices $T_wtMe_i$ are elements of $T_w\Lambda$ that lie on the lines $y-z = 0,x-z=0,y-z=0$ in $R'$ through $b$. As $(T_wtM)A_p$ is equilateral,
\[T_{w}tM=\begin{bmatrix}
c_0/3 & c_1/3 & c_1/3 \\
c_1/3 & c_0/3 & c_1/3 \\
c_1/3 &  c_1/3 & c_0/3 \\
\end{bmatrix}\]
yielding \eqref{equ:badmat1}.
\begin{case}
$Me_1-Me_2 =  k(1,1,-2),Me_2-Me_3 =k(-2,1,1),Me_3-Me_1 = k(1,-2,1)$.\end{case} Observe, 
\[Mb = \frac{1}{3}(Me_1+Me_2+Me_3) = \frac{1}{3}(Me_1+Me_1-k(1,1,-2)+Me_1+k(1,-2,1)). \]
Thus $Mb \in \Lambda$, so $|H_{Mb}|=12$. Let $b^{\perp}$ be the orthogonal hyperplane to $b$ in $\R^3$. Direct calculation shows $Me_i-Mb = k(e_{i+2}-e_{i+1})$ (indices modulo 3), thus $Me_i-Mb\in b^{\perp}$.  Similar to subcase \ref{1.2}, there exists a translation $g\in G$ of $A$ with $gMb=e_3$. Since $g$ is given by matrix multiplication that acts as the identity on $(1/3,1/3,1/3)^{\perp}$, we have $gMe_i = gMb +  gk(e_{i+2}-e_{i+1}) = e_3 +  k(e_{i+2}-e_{i+1}),$ or 
\[gM=\begin{bmatrix}
0 & k & -k \\
-k & 0 & k \\
k+1 &  -k+1 & 1 \\
\end{bmatrix}\]
yielding \eqref{equ:badmat2}.
\end{proof}
\begin{cor}
\label{thm:pedallin}
The pedal mapping $P: D\to D$ may be defined where $P(v)$ is flat so that $P$ is linear.
\end{cor}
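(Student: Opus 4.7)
The plan is to exhibit a specific ATM whose descent to $D$ realizes $P$ after extending across the right-triangle locus, where classically $P(v)$ is flat and Hobson's two formulas disagree. Section \ref{sec:pedal} already suggests the candidate: the Type I circulant-symmetric matrix
\[ M = \begin{bmatrix} -1 & 1 & 1 \\ 1 & -1 & 1 \\ 1 & 1 & -1 \end{bmatrix}, \]
with $c_0 = -1$, $c_1 = 1$. Substituting $\alpha + \beta + \gamma = 1$ into \eqref{Hobsonone} converts the acute formula to $Mv$, and the obtuse formulas \eqref{Hobsontwo} (with their $\beta,\gamma$ analogues) become $R_\alpha M$, $R_\beta M$, $R_\gamma M$ as in \eqref{Ralpha}. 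Since $R_\alpha, R_\beta, R_\gamma \in G$, all four local rules descend to the same map $\overline{M}$ on $A/G$, which therefore already agrees with $P$ on every non-right triangle in $D$.

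What remains is to verify that $M$ is in fact an ATM. Invertibility and preservation of $A$ are immediate from $\det M = 4 \neq 0$ and the column sums equaling $1$; concretely, $M|_A$ is the affine dilation $v \mapsto (1,1,1) - 2v$ centered at $b$. For preservation of re-expression I would check the two conditions $M\Lambda \subset \Lambda$ and $MR \subset R$ of Lemmas \ref{thm:lambda} and \ref{thm:edges}. The first is an immediate lattice computation: $Me_1 = (-1,1,1) \in \Lambda$, $M(e_1-e_2) = -2(e_1-e_2)$, and $M(e_2-e_3) = -2(e_2-e_3)$. For the second, the dilation description sends $\{x = k\} \cap A$ to $\{x = 1-2k\} \cap A$ and $\{x - y = k\} \cap A$ to $\{x - y = -2k\} \cap A$, and symmetrically for the other families in \eqref{equ:refllines}, each landing on another integer line. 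Combined with the fact that $M$ commutes with every $\sigma \in \Sigma_3$ (being circulant-symmetric with equal off-diagonal entries), this gives $MgM^{-1} \in G$ on $A$ for each generator $g$ of $G$, so $M$ preserves re-expression and is a Type I ATM.

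The corollary then follows: setting $P(v) := [Mv] \in D$ across the right-triangle locus continuously extends $P$ to the linear triangle map $\overline{M}$. The main obstacle is the re-expression verification, but it reduces to the short lattice and reflection-line computations above; every other step is bookkeeping against the formulas already displayed in Section \ref{sec:pedal}.
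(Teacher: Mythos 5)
Your proof is correct and follows the same route as the paper: exhibit the explicit circulant-symmetric matrix $M$ from Section \ref{sec:pedal}, observe via Hobson's formulas and the identifications $R_\alpha, R_\beta, R_\gamma$ that its quotient agrees with $P$ off the right-triangle locus, and use $M$ itself to define $P$ there. The paper's proof simply cites the Section \ref{sec:pedal} observations and leaves the verification that $M$ is an ATM implicit, so your explicit checks of $M\Lambda\subset\Lambda$, $MR\subset R$, and the conjugation of the generators of $G$ are a welcome (and correct) filling-in of that gap rather than a different argument.
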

\begin{proof}
The observations in section \ref{sec:pedal} verify that $P:D\to D$ agrees with the quotient of the Type $1$ ATM  
\[M=\begin{bmatrix}
-1 & 1 & 1 \\
1 & -1 & 1 \\
1 &  1 & -1 \\
\end{bmatrix}\]
for triangles $T$ whose first pedal triangle is not flat. This ATM shows precisely how to define the $P$ when $P([T])$ is flat so that the extended map is linear. 
\end{proof}
The dynamics of linear triangle maps now follow from well known results  (cf \cite[pp 80 156]{KH97}).
\begin{cor}
\label{thm:dynamics}
Let $f:\mathcal{T}/S\to\mathcal{T}/S$, with ATM $M$. the image $MD$ of the fundamental domain $D$ is a $30-60-90$ triangle in $A$ whose edges lie in $R$. Consequently, there exist $|det(M)|$ elements of $GD$ which tile $MD$. The preimages of these $|det(M)|$ triangles form a Markov partition of $D$, making $M$ semiconjugate to a one-sided Bernoulli shift on $|det(M)|$ symbols. Thus $M$ is mixing, and hence, ergodic.   
\end{cor}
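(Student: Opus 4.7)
The plan is to read off each assertion of the corollary from the structural facts already in hand. First, I would check that $MD$ is a $30$-$60$-$90$ triangle with edges in $R$. The fundamental domain $D$ is itself the $30$-$60$-$90$ subtriangle of $A_p$ with vertices $b=(1/3,1/3,1/3)$, $e_1$, and $(1/2,1/2,0)$, whose three edges lie respectively on the reflection lines $y-z=0$, $x-y=0$, $z=0$ from \eqref{equ:refllines}. By \Cref{thm:equil}, $MA_p$ is equilateral, so by linearity $MD$ is the analogous $30$-$60$-$90$ subtriangle of $MA_p$ with one vertex at $Mb$, and its boundary lies in $R$ because $MR\subset R$ by \Cref{thm:edges}.

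Next I would count the tiles. Because each column of $M$ sums to $1$, $M$ induces the identity on the quotient $\R^3/A_0$, where $A_0=\{x+y+z=0\}$ is the tangent space to $A$. Choosing a basis adapted to the decomposition $\R^3=A_0\oplus\langle b\rangle$ therefore puts $M$ in block upper triangular form with a $2\times 2$ diagonal block $M_0$ and a $1\times 1$ diagonal block equal to $1$, whence $\det M=\det M_0$ is the area-scaling factor of the restriction $M|_A$. Thus $\mathrm{Area}(MD)=|\det M|\cdot\mathrm{Area}(D)$. Since the boundary of $MD$ lies in $R$ and the three vertices of $MD$ lie in distinguished $G$-orbits (by \Cref{thm:lambda} and \Cref{thm:equil}), $MD$ decomposes, up to measure zero, as a disjoint union $\bigsqcup_{i=1}^{|\det M|} g_iD$ of $G$-translates of $D$.

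Third, set $D_i:=M^{-1}(g_iD)\cap D$. By construction the $D_i$ have pairwise disjoint interiors and cover $D$, and each restriction $g_i^{-1}M\colon D_i\to D$ is an affine bijection; composing with the quotient identification $A/G\cong D$ yields $\bar M|_{D_i}\colon D_i\to D$ surjecting onto $D$ for every $i$. This is the defining property of a Markov partition whose transition graph is full on $|\det M|$ symbols, so the itinerary map $v\mapsto (i_n)_{n\ge 0}$ determined by $\bar M^n(v)\in D_{i_n}$ is a semiconjugacy from $\bar M$ to the one-sided Bernoulli shift on $|\det M|$ symbols, as in \cite[pp.~80, 156]{KH97}. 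Mixing and ergodicity of $\bar M$ then pass through this semiconjugacy from the corresponding properties of the shift.

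The step I expect to be the most delicate is the second: simultaneously identifying the Jacobian of $M|_A$ with the integer $|\det M|$ and confirming that the number of $G$-translates of $D$ needed to tile $MD$ equals that same integer. The block triangular identification above sidesteps the awkwardness that $M$ is $3\times 3$ while $A$ is two-dimensional, and the classification in \Cref{thm:circsym} is exactly what guarantees $\det M\in\Z$, so that the symbol count is a well-defined positive integer.
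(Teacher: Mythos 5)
The paper offers no proof of this corollary at all: it simply asserts the statement and points to \cite{KH97} and to \cite{Al93} (where the Markov partition is worked out for the pedal map). Your write-up supplies essentially the argument the authors intend, and it is correct in its main lines: $MD$ is a $30$-$60$-$90$ triangle because $M|_A$ carries the equilateral triangle $A_p$ to the equilateral triangle $MA_p$ (hence is a similarity of the plane $A$) and $\partial(MD)=M(\partial D)\subset MR\subset R$; the block-triangular decomposition relative to $A_0\oplus\langle b\rangle$ correctly identifies $|\det M|$ with the area-scaling factor of $M|_A$; and since $A\setminus R$ is exactly the union of the open chambers $g\,\mathrm{int}(D)$, a region with boundary in $R$ is a union of chamber closures, so the tile count is forced by the area computation. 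The one step you should not wave through is the final inference ``semiconjugate to a Bernoulli shift, hence mixing.'' The itinerary map $h$ satisfies $h\circ\bar M=\sigma\circ h$, which exhibits the shift as a \emph{factor} of $\bar M$; mixing passes from an extension to a factor, not the other way, so this alone does not give mixing of $\bar M$. What rescues the argument (and is implicit in \cite{Al93}) is that every non-identity ATM restricts to an \emph{expanding} similarity of $A$ (the classification gives linear part of modulus $|1-3c_1|\ge 2$ in Type I and $|k|\sqrt{3}$ in Type III, etc.), so the diameters of the cylinder sets $D_{i_0}\cap\bar M^{-1}D_{i_1}\cap\cdots$ tend to zero and $h$ is a measure-theoretic isomorphism off a null set; then $\bar M$ inherits mixing from the full shift. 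You should also exclude the trivial case $M\in G$, where $|\det M|=1$ and the conclusion fails; the corollary as stated glosses over this as well.
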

See \cite{Al93} for a detailed discussion of a Markon partition for $P$. 
\section{Type I triangle maps}
\label{sec:construct} 
\Cref{thm:circsym} found triangle maps abstractly as angle transition matrices on $A$. We offer one method for constructing triangles with prescribed ATM $M$. We focus on Type I ATM's, following the intuition for the pedal map developed in \cref{sec:pedal}. In this section, we find it easier to work in $A_p$ instead of $D$. Let $M$ be a nonidentity Type I ATM, i.e., $M$ is circulant and symmetric, with integer entries and columns that sum to $1$. Observe that $M$ has one eigenvalue of $1$ with eigenvector $b=(1/3,1/3,1/3)$, and repeated eigenvalue $c_0-c_1= 1-3c_1 \in\Z$ with eigenvectors $e_1-e_2,e_2-e_3$. As $c_1\neq 0$, $M$ is expanding. Thus $M^{-1}$ is a contraction map on $A$, hence $A_p$, where $M^{-1}A_p$ is an equilateral triangle with barycenter $b$. Notice that $M^{-1}$ does not preserve re-expression;, if $v$ is a nearest neighbor to $b$ with $v\sim b$ then $b=Mb \nsim Mv$ because $M$ is a contraction. We seek invertible matrices $M_1\cdots M_k$ such that $M^{-1} = M_1^{-1}\cdots M_k^{-1}$ and each $M_i$ is easily recognized by some triangle construction. Inverting, $M = M_k\cdots M_1$. If we allow for intermediate products $M_l\ldots,M_1$ to not preserve re-expression, then any triangle map with Type I matrix can be constructed as a composition inverse processes. 
The intermediate constructions will be similar to the construction of Hofstadter triangles \cite{Ki92}. Let $T_0=ABC$ be a triangle with shape $(\alpha_0,\beta_0,\gamma_0)\in A_p$ and let $0<r<1$. To construct $T_{-1}$, rotate the line $\overline{AB}$ about $B$ and towards $C$ and rotate the line $\overline{AC}$ about $C$ and towards $B$ until the lines intersect at a point $A'$ in the interior of $T_0$, so that $T_{-1}=A'BC$ has shape $(\alpha_{-1},\beta_{-1},\gamma_{-1})=((1-r)\alpha+(1-r)\beta+\gamma,r\beta,r\gamma)$, or
\[H_{A,r} = 
\begin{bmatrix}
\alpha_{-1}\\
\beta_{-1}\\
\gamma_{-1}
\end{bmatrix}=
\begin{bmatrix}
1 & 1-r & 1-r \\
0 & r & 0 \\
0 &  0 & r \\
\end{bmatrix}
\begin{bmatrix}
\alpha_0\\
\beta_{0}\\
\gamma_{0}
\end{bmatrix}.\]
Similarly, define $H_{B,r}$ and $H_{C,r}$. We call  and $H_{A,r},H_{B,r},H_{C,r}$ \emph{Hofstadter matrices}. If we include the antipedal map $P^{-1}$, we claim that for any Type I ATM $M$, there exists $r_1,r_2,r_3$ such that
\begin{equation}
\label{equ:decomp}
M^{-1} = H_{A,r_1}H_{B,r_2}H_{C,r_3}P^{-1} \text{ or }
M^{-1} = H_{A,r_1}H_{B,r_2}H_{C,r_3}
\end{equation}

To see this, notice that $M^{-1}$ remains circulant and symmetric. Thus $M^{-1}A_p$ is an equilateral triangle with barycenter $b$ and edges parallel to $A_p$. Since the repeated eigenvalue of $P$ is $-2<0$, either $MA_p$ or $MPA_p$ is homothetic to $A_p$ with positive scaling factor $r$ that fixes $b$. notice, $H_{A,r}A_p$ is an equilateral triangle with corners $e_1, (1-r)e_1+re_2,(1-r)e_1+re_3$, so $H_{A,r}$ shrinks $A_p$, fixing $e_1$ and the lines $\overleftrightarrow{e_1e_2}$ and $\overleftrightarrow{e_1e_3}$ with  $H_{A,r}\overleftrightarrow{e_2e_3}$ parallel to $M^{-1}\overleftrightarrow{e_2e_3}$. So choose $r_1$ so that  $H_{A,r_1}\overline{e_2e_3}$ lies on $M^{-1}\overleftrightarrow{e_2e_3}$ (resp $M^{-1}P^{-1}\overleftrightarrow{e_2e_3}$. Notice that for any  $r_2$, $H_{A,r_1}H_{B,r_2}A_p$ will be an equilateral triangle with $H_{A,r_1}H_{B,r_2}\overleftrightarrow{e_2e_3}$ lying on $M^{-1}\overleftrightarrow{e_2e_3}$ (resp $M^{-1}P^{-1}\overleftrightarrow{e_2e_3}$), so choose $r_2$ so that $H_{A,r_1}H_{B,r_2}A_p$ shares a vertex and two edges of $M^{-1}A_p$ (resp $M^{-1}P^{-1}A_p$). Notice that precomposing $H_{A,r_1}H_{B,r_2}$ with $H_{C,r_3}$ fixes this shared vertex, and the desired $r_3$ can be found.\\
Inverting, say $H_{A,r_1}$, construct a triangle $T_1$ from $T_0$ as follows: on a triangle with interior angles $(\alpha,\beta,\gamma)$ rotate the line $\overleftrightarrow{AB}$ about $B$ and away from $C$ by $(1-1/r)\beta$ and rotate the line $\overleftrightarrow{AC}$ about $C$ and away from $B$ by $(1-1/r)\gamma$, calling $A'$ the new intersction point of the rotated lines. If $(1-1/r)(\beta+\gamma)\in\Z$ then the new triangle has a pair of parallel lines and if $1<(1-1/r)(\beta+\gamma)<2$, then $A'$ lies on the other side of $\overleftrightarrow{BC}$ than $A$. Nevertheless we may extend $H_{A,r_1}^{-1}$ by linearity to a linear map defined on $A$. Notice that none of $H_{A,r_1}^{-1}$, $H_{B,r_2}^{-1}$, $H_{A,r_3}^{-1}$ preserve re-expression, since none leave $\Lambda$ invariant. However,

\[M=PH_{C,r_3}^{-1}H_{B,r_2}^{-1}H_{A,r_1}^{-1},\]      
shows $PH_{C,r_3}^{-1}H_{B,r_2}^{-1}H_{A,r_1}^{-1}$ preserves re-expression. 
As an example, consider the Type I matrix 
\[N = \begin{bmatrix}
-3 & 2 & 2 \\
2 & -3 & 2 \\
2 &  2 & -3 \\
\end{bmatrix},\]
with repeated eigenvalue $-5$. $N$ induces a $25\ (=|\det(N)|)$ to $1$ map on $A/G$, with inverse 
\[N^{-1} = \begin{bmatrix}
1/5 & 2/5 & 2/5 \\
2/5 & 1/5 & 2/5 \\
2/5 &  2/5 & 1/5 \\
\end{bmatrix} \]
The proof that outlines the decomposition \eqref{equ:decomp} also provides an algorithm, and  
\begin{align*} 
N^{-1} &= H_{A,4/5}H_{B,3/4}H_{C,2/3}P^{-1}\\ &=\begin{bmatrix}
1 & 1/5 & 1/5 \\
0 & 4/5 & 0 \\
0 &  0 & 4/5 \\
\end{bmatrix}\begin{bmatrix}
3/4 & 0 & 0 \\
1/4 & 1 & 1/4 \\
0 &  0 & 3/4 \\
\end{bmatrix}\begin{bmatrix}
2/3 & 0 & 0 \\
0 & 2/3 & 0 \\
1/3 &  1/3 & 1 \\
\end{bmatrix}\begin{bmatrix}
0 & 1/2 & 1/2 \\
1/2 & 0 & 1/2 \\
1/2 &  1/2 & 0 \\
\end{bmatrix}.
\end{align*}
\begin{figure}
\label{fig:twentyfivetoone}
\begin{center}
\scalebox{.4}{\includegraphics{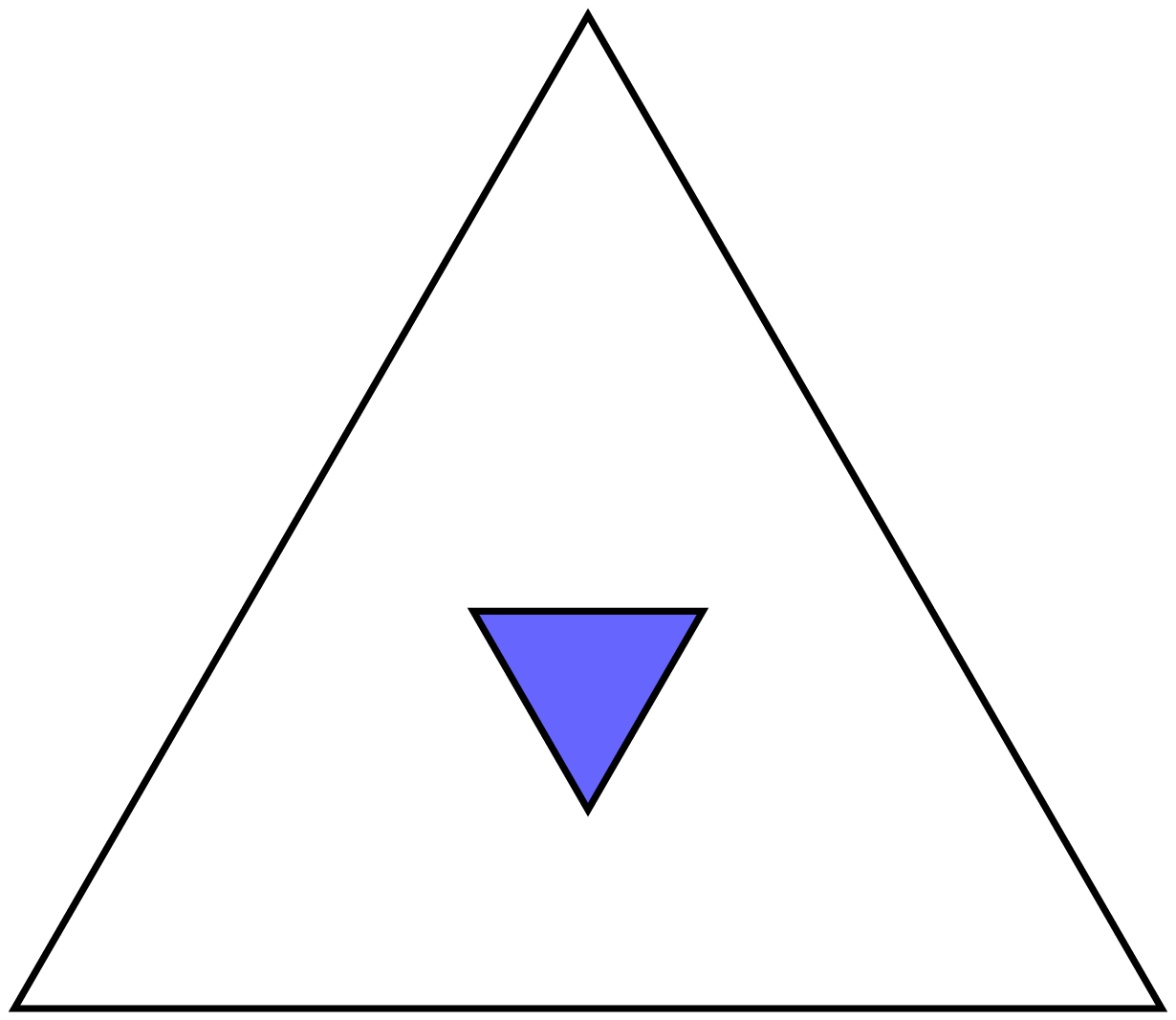}}\scalebox{.4}{\includegraphics{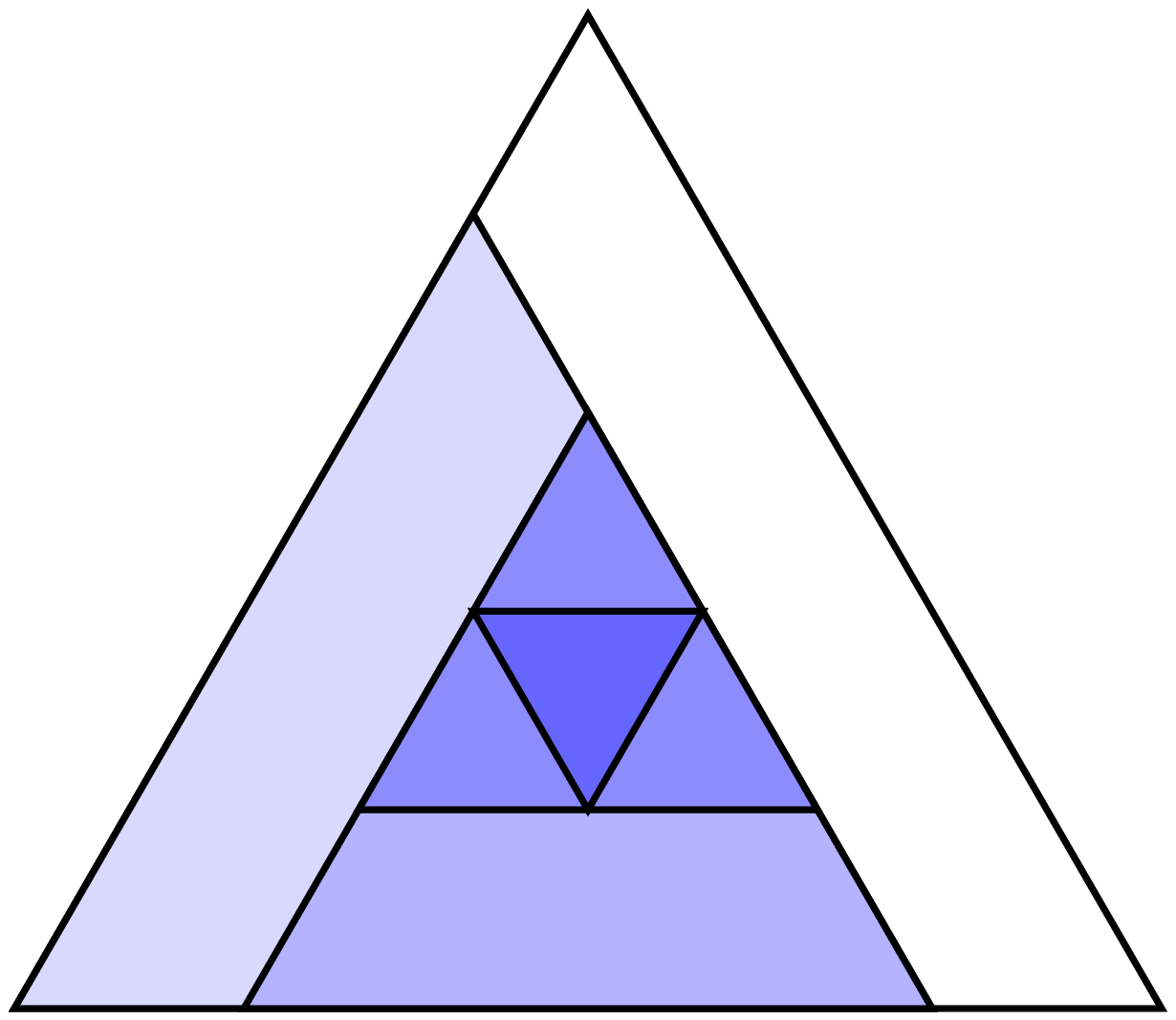}}
\end{center}
\caption{The figure on the left is $N^{-1}A_p$ shaded in blue in $A_p$ (white). The figure on the right is the effect of the composition $H_{A,4/5}H_{B,3/4}H_{C,2/3}P$ on $A_p$. The four nested shaded triangles show the composition.} 
\end{figure}
The effect of this composition on $A_p$ is shown in Figure \ref{fig:twentyfivetoone}
\section{Future Research}
The construction in section \cref{sec:construct} is not canonical. Indeed, the Hofstatdter matrices $H_{A,r},H_{B,r},H_{C,r}$ are conjugate by permutation matrices, and \[ N^{-1} = H_{C,4/5}H_{B,3/4}H_{A,2/3}P^{-1}\] is another way of decomposing $N^{-1}$. We wonder whether there is a canonical construction whose inverse is $N^{-1}$, similar to the pedal mapping. We guess that its discoverer would be rewarded with a wonderful picture. We also wonder about connections between triangle maps and constructable numbers.\\
Obviously, one could relax the assumption of linearity, to find, e.g., continuous maps $f:A\to A$ that preserve re-expression. One might investigate linear $n$-gon maps (though convexity might be an issue). \cite{DHZ03} examined convergent polygon constructions and our inversion approach could unveil new constructions. In \cite{Al93}, the author posed a question about the dynamics of pedal tetrahedron and we include linear tetrahedron constructions to the list of open questions. 

\section{Acknowledgment.}
 Much of this work was completed as a summer reseach project that was funded through a Dean of Faculty grant at Amherst College. The second author thanks Rob Benedetto and Peter Connor for many helpful comments on a draft version of this paper, and Dan Velleman, Michael Ching and Rob Benedetto for many useful discussions.


\begin{thebibliography}{AgK}
\bibitem[Tu33]{Tu33}
C.O. Tuckey, \emph{Angles of Pedal Triangles}, Math. Gaz. \textbf{17} (1933), no. 222, 48--49.

\bibitem[Ta45]{Ta45}
D.G. Taylor. \emph{Successive Pedal Triangles}, Math. Gaz. \textbf{30} (1946), no. 288, 11--13.

\bibitem[KS88]{KS88}
J. G. Kingston and J. L. Synge, \emph{The sequence of pedal triangles}, Amer. Math. Monthly \textbf{95} (1988), no. 7, 609--620.

\bibitem[La90]{La90}
P. D. Lax, \emph{The ergodic character of sequences of pedal triangles}, Amer. Math. Monthly \textbf{97} (1990), no. 5, 377--381.

\bibitem[Al93]{Al93}
J. C. Alexander, \emph{The symbolic dynamics of the sequence of pedal triangles}, Math. Mag. \textbf{66} (1993), no. 3, 147--158. 

\bibitem[Ma10]{Ma10}
A. Manning, \emph{The limit of a pedal sequence of triangles}, Bull. London Math. Soc. \textbf{42} (2010), no. 4, 673--679.

\bibitem[Un90]{Un90}
P. Unger, \emph{Mixing property of the pedal mapping}, Amer. Math. Monthly \textbf{97} (1990), no. 10, 898--900.

\bibitem[Ki92]{Ki92}
C. Kimberling, \emph{Hofstadter points}, Nieuw Archief voor Wiskunde \textbf{12} (1994) 109-114.

\bibitem[KH97]{KH97}
A Katok and B.Hasselblatt, \emph{Introduction to the modern theory of dynamical systems}, Paperback edition. (Cambridge University Press, Cambridge, 1997) 

\bibitem[Th02]{Th02}
W. Thurston \emph{The Geometry and Topology of Three Manifols}, Unpublished Manuscript (2002). \url{http://library.msri.org/books/gt3m}

\bibitem[DHZ03]{DHZ03}
J. L. Ding and L. R. Hitt and X. Zhang, \emph{Markov chains and dynamic geometry of polygons}, Linear Algebra Appl. \textbf{367} (2003), 255--270. 

\end{thebibliography}
\end{document}